\numberwithin{equation}{section}%数式番号をsection.xにする
\newcommand{\R}{\bm{R}}
\newcommand{\e}{\varepsilon}
\newcommand{\vphi}{\varphi}
\newcommand{\Pu}{\mathcal{P}}
\newcommand{\la}{\lambda}
\newcommand{\La}{\Lambda}
\newcommand{\tr}{\mathrm{tr}}
\newtheorem{Thm}{\bf Theorem}[section]
\newtheorem{Lemma}{\bf Lemma}[section]
\newtheorem{Prop}{\bf Proposition}[section]
\newtheorem{Remark}{\bf Remark}[section]
\newtheorem{Def}{\bf Definition}[section]
\providecommand{\keywords}[1]{\textbf{\textit{Keywords:}} #1}
\providecommand{\MSC}[1]{\textbf{\textit{MSC:}} #1}
\begin{document}
\title{Existence of global-in-time solutions 
to a system \\
of fully nonlinear parabolic equations}
\author{Takahiro Kosugi \footnote{Tottori University of Environmental Studies, Tottori, Japan, t-kosugi@kankyo-u.ac.jp} 
\,and Ryuichi Sato\footnote{Fukuoka University, Fukuoka, Japan, rsato@fukuoka-u.ac.jp}
\vspace{5pt}\\
}
\date{}

\maketitle

\begin{abstract}
We consider the Cauchy problem for a system of fully nonlinear parabolic equations. 
In this paper, we shall show the existence of global-in-time solutions to the problem. 
Our condition to ensure the global existence is specific to the fully nonlinear parabolic system. 
\end{abstract}
\keywords{viscosity solutions, fully nonlinear parabolic systems, 
global-in-time solutions, comparison principle}\\
\MSC{35A01, 35D40, 35K45, 35K55}

%\begin{itemize}
%  \item 2010 Mathematics Subject Classification\\
%  Primary: 35B44, 35K55, 35K60.
%  \item Key words and phrases\\
%  Nonlinear boundary condition, heat equation, growing initial data,\\
%  blow-up time.
%\end{itemize}
%\newpage
%%%%%%%%%%%%%%%%%%%%%%%%%%%%%%%%%%%%%
%%%%%%%%%%%%%%%%%%%%%%%%%%%%%%%%%%%%%
\section{Introduction}
%%%%%%%%%%%%%%%%%%%%%%%%%%%%%%%%%%%%%
%%%%%%%%%%%%%%%%%%%%%%%%%%%%%%%%%%%%%
Let us consider the Cauchy problem for a weakly coupled system of nonlinear parabolic equations 
\begin{equation}\label{eq:P}
\left\{
\begin{aligned}
\partial_t u_{1} +F_{1}(x,D^2 u_{1}) = |u_{2}|^{p-1}u_{2}, \quad x\in \R^N, \ t>0, \\
\partial_t u_{2} +F_{2}(x,D^2 u_{2}) = |u_{1}|^{q-1}u_{1}, \quad x\in \R^N, \ t>0,
\end{aligned}
\right.
\end{equation}
with initial condition
\begin{equation}\label{eq:ic}
u_{i}(x,0) = u_{i0}(x), \quad x\in \R^N  \mbox{ for } i=1,2,
\end{equation}
where $N\geq 1$, $p,q>0$, $F_{1}, F_2\in C(\R^{N}\times S^{N})$ are uniformly elliptic and homogeneous of order one, and $u_{10}$, $u_{20} \in BUC(\R^N)$ are nonnegative. 
Here $\partial_{t} u_{i}$ denotes the derivative $\partial u_{i}/\partial t$ and $D^{2}u_{i}$ denotes the Hessian matrix of $u_{i}$ in the variable $x$. 
Throughout this paper, we let $S^{N}$ denote the $N\times N$ real symmetric matrices and let $BUC(\R^N)$ denote the set of bounded uniformly continuous functions on $\R^N$. 
%We will assume precise conditions on $F_{i}$'s later. 

In~\cite{EH1}, Escobedo and Hererro considered the Cauchy problem for a system of semilinear parabolic equations 
\begin{equation}\label{eq:F}
\partial_{t}u_{1} -\bigtriangleup u_{1} = u_{2}^{p}, \quad 
\partial_{t}u_{2} - \bigtriangleup u_{2} = u_{1}^{q}, \quad x\in \R^{N}, t>0 
\end{equation}
with \eqref{eq:ic}, 
where $N\geq1$, $p,q>0$, 
and $\bigtriangleup $ denotes the Laplace operator, that is, 
\[
\bigtriangleup:= \sum_{j=1}^{N}\frac{\partial^{2}}{\partial x_{j}^{2}}. 
\]
The system \eqref{eq:F} agrees with the case 
$F_{1}(x,X)=F_{2}(x,X)= -\tr(X)$ for $x\in \R^{N}$, $X\in S^{N}$. 
Escobedo and Hererro proved that if $pq>1$ and 
\[
\frac{\max\{p,q\}+1}{pq-1} \geq \frac{N}{2}, 
\]
then every nontrivial nonnegative solution to \eqref{eq:F} blows up in a finite time. 
On the other hand, if $pq>1$ and 
\begin{equation}\label{eq:SCGS}
\frac{\max\{p,q\}+1}{pq-1} < \frac{N}{2}, 
\end{equation}
then there exists a global-in-time solution to \eqref{eq:F} for 
some $u_{10}$, $u_{20}$. 
These results show that the existence of nonnegative global-in-time solutions to \eqref{eq:F} is clarified by the curve 
\begin{equation}\label{eq:exp.sys.Fujita}
\frac{\max\{p,q\}+1}{pq-1} = \frac{N}{2}. 
\end{equation}
%
%System~\eqref{eq:P} with linear but unequal principal parts are considerd in several papers (e.g. \cite{FI1}, \cite{FI2}, \cite{U}). 
This Fujita type result for \eqref{eq:F} 
is extended by \cite{FI2}, \cite{U} to the case where the system with linear but unequal principal parts. 
In~\cite{U}, 
$-\bigtriangleup u_{1}, -\bigtriangleup u_{2}$ are replaced by the linear operators of the form 
\begin{equation*}%\label{eq:linearF}
\begin{aligned}
L_{1}u_{1} 
= -\sum_{j,k=1}^{N} \frac{\partial}{\partial x_{j}}
\left(a^{jk}\frac{\partial u_{1}}{\partial x_{k}}\right), \quad
L_{2}u_{2} 
= -\sum_{j,k=1}^{N} \frac{\partial}{\partial x_{j}}
\left(b^{jk}\frac{\partial u_{2}}{\partial x_{k}}\right), 
\end{aligned}
\end{equation*}
where the coefficients $a^{jk}$, $b^{jk}$ are sufficiently smooth, uniformly elliptic and symmetric. 
In particular, the system with constant diffusion coefficients $L_1 = -d_1\bigtriangleup u_{1}$, $L_2 = -d_2\bigtriangleup u_{1}$, $d_1$,$d_2>0$ is considered in~\cite{FI2} (see also~\cite{FI1} for another context). 
The Fujita exponent for the system 
\[
\partial_{t} u_{1} +L_{1} u_{1} = u_{2}^{p}, \quad  
\partial_{t} u_{2} +L_{2} u_{2} = u_{1}^{q}, \quad x\in\R^{N}, \ t>0
\]
is also given by \eqref{eq:exp.sys.Fujita}. 
Namely, the Fujita exponent is given by~\eqref{eq:exp.sys.Fujita} if  
the principal parts are linear.

Let us introduce results for a single equation. 
Setting $u_{1}=u_{2}=u$, $F_{1}=F_{2}=F$ and $p=q>1$, then \eqref{eq:P} becomes a single nonlinear parabolic equation 
\begin{equation}\label{eq:sF}
\partial_{t}u + F(x,D^{2} u) = u^{p}, \quad x\in \R^{N}, \ t>0. 
\end{equation}
Typical examples of $F$ are given below. 
When $F(D^{2}u) = -\bigtriangleup u$, \eqref{eq:sF} is the Fujita equation. 
In~\cite{F}, Fujita considered the Cauchy problem for \eqref{eq:sF} with $F(x,D^{2}u)= -\bigtriangleup u$. 
He proved that the critical exponent for the existence of nonnegative global-in-time solutions is given by 
\[
\frac{1}{p-1} = \frac{N}{2}. 
\]
More precisely, if $1<p < p_{F} := 1+2/N$, then all positive solutions blow-up in a finite time, while if $p>p_{F}$, then there exists a positive global-in-time solution of \eqref{eq:sF}. 
(Readers are referred to \cite{DL} for a survey of blow-up problems.) 
When $F$ is fully nonlinear, 
the critical exponent for the existence of global-in-time solutions to \eqref{eq:sF} was obtained in \cite{MQ1,MQ2}. 
We employ the viscosity solutions to treat fully nonlinear equations. 
To give prrecise examples and state the existence of viscosity solutions, we  suppose precise assumptions on $F_{i}$'s. 
The definition of viscosity solutions is given in the next section. 

For $i=1,2$, we assume that $F_{i}:\R^{N}\times S^{N}\to\R$ satisfies the following properties. 
\begin{enumerate}
\item
$F_{i}$ is continuous in $\R^{N}\times S^{N}$, 
that is, 
\begin{align}\label{eq:Fconti}
F_{i}\in C(\R^{N}\times S^{N}). 
\end{align}
\item
There exist constants $0<\lambda_{i}\leq \Lambda_{i}$ such that
\begin{align}\label{eq:ellipticity}
\Pu_{i}^{-}(X-Y)\leq F_{i}(x,X)-F_{i}(x,Y)\leq \Pu_{i}^{+}(X-Y)
\end{align}
for $(x,X,Y)\in \R^{N}\times S^{N}\times S^{N}$, 
where $\Pu^{\pm}_{i}$ are the Pucci extremal operators 
defined by 
\begin{align*}
\Pu^{+}_{i}(X)
&=\Pu^{+}_{\la_{i},\La_{i}}(X)
:=\max\{\tr[-A X] \ | \la_{i} I\leq A\leq \La_{i} I,\ A\in S^{N}\},\\
\Pu^{-}_{i}(X)
&=\Pu^{-}_{\la_{i},\La_{i}}(X)
:=\min\{\tr[-A X] \ | \la_{i} I\leq A\leq \La_{i} I,\ A\in S^{N}\}, 
\end{align*}
for $X\in S^{N}$.  

\item
$F_{i}$ is Lipshitz continuous in $x$. 
Namely, there exists $L>0$ such that 
\begin{equation}\label{eq:FLip}
|F_{i}(x,X)- F_{i}(y,X)| \leq L(\|X\| + 1)|x-y|
\end{equation}
for all $X\in S^{N}$ and $x,y \in \R^{N}$. 
Here $\|X\|$ stands for the operator norm of $X$. 
\item
$F_{i}$ is homogeneous of order one. 
Namely, 
\begin{equation}\label{eq:Fhom}
F_{i}(x,\mu X) = \mu F_{i}(x, X)
\end{equation}
for $\mu \geq0$, $ x\in \R^{N}$, $ X\in S^{N}$. 
\end{enumerate}
We shall give two examples of $F:\R^{N}\times S^{N}\to\R$ satisfying above conditions. 
\begin{itemize}
\item
Let $0<\gamma<1$. 
The operator 
\[
F(D^{2}u) = 
\max\left\{ -\frac{\bigtriangleup u}{1-\gamma}, -\frac{\bigtriangleup u}{1+\gamma} \right\} 
\]
is nonlinear and convex. 
Equation $\partial_t u +F(D^2 u)=0$ is called the Barenblatt equation of Elasto-Plastic equation. 
See~\cite{HV} and \cite{KPV}. 
\item
Let $N=2$. 
Then 
\[
F(D^{2}u) = 
\min\left\{\max\{ -\bigtriangleup u, -2\bigtriangleup u\}, - u_{x_{1}x_{1}}-2 u_{x_{2}x_{2}}\right\} 
\]
is a nonlinear and nonconvex operator. 
%See~\cite{AT}. 
\end{itemize}

We now state the comparison principle. 
The proof is given in Section~\ref{sec:CP}. 
\begin{Thm}[Comparison principle]\label{thm:CP}
Assume that $p,q\geq 1$ and let $T>0$. 
Let $(u_{1},u_{2})\in USC\cap L^{\infty}(\R^{N}\times [0,T))^{2}$ 
be a viscosity subsolution and $(v_{1},v_{2})\in LSC\cap L^{\infty}(\R^{N}\times [0,T))^{2}$ be a viscosity supersolution  of \eqref{eq:P}, respectively. 
If 
\begin{align*}
u_{i}(\cdot,0)\leq v_{i}(\cdot,0)\quad \mbox{in } \R^{N} \mbox{ for }i=1,2,
\end{align*}
then 
\begin{align*}
u_{i}\leq v_{i}\quad \mbox{in } \R^{N}\times (0,T) \mbox{ for }i=1,2. 
\end{align*}
\end{Thm}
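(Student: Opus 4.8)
The plan is to run the standard doubling-of-variables argument for viscosity solutions, adapted to the weakly coupled system. First I would argue by contradiction: suppose
\[
\sigma := \sup_{i=1,2}\ \sup_{\R^N\times(0,T)} (u_i - v_i) > 0.
\]
Since the $u_i,v_i$ are only bounded (and $\R^N$ is unbounded in $x$ and $t\to T$ is a "bad" boundary), I would first regularize: fix small parameters and work with the perturbed function
\[
\Phi_{i}(x,y,t) = u_i(x,t) - v_i(y,t) - \frac{|x-y|^2}{2\e} - \frac{\delta}{T-t} - \beta\langle x\rangle,
\]
where $\langle x\rangle = \sqrt{1+|x|^2}$ (or a similar coercive weight) forces a maximum to be attained at some interior point $(\bar x_i,\bar y_i,\bar t_i)$ with $\bar t_i>0$ once $\delta,\beta$ are small relative to $\sigma$. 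Using the homogeneity \eqref{eq:Fhom} and the uniform ellipticity \eqref{eq:ellipticity}, the weight term contributes a controllable error to the equations, so I can let $\beta\to0$ at the end. The penalization $\delta/(T-t)$ produces a strictly positive term $\delta/(T-t)^2$ on the subsolution side, which is what ultimately yields the contradiction.

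The core step is the application of the parabolic theorem on sums (Crandall–Ishii lemma) at the maximum point: for the index $i$ realizing the sup, I obtain $(\tau_i, p_i, X_i)$ in the closure of the parabolic superjet of $u_i$ and $(\tau_i, p_i, Y_i)$ in the closure of the parabolic subjet of $v_i$, with $p_i = (\bar x_i - \bar y_i)/\e$ (modulo the $\beta$ term) and matrices satisfying $X_i \le Y_i + C\e^{-1}(\text{rank-one type bound})$, hence $\Pu_i^-(X_i - Y_i) \le 0$ after discarding the harmless lower-order piece. Subtracting the two viscosity inequalities and using the structure condition \eqref{eq:ellipticity} together with the Lipschitz dependence \eqref{eq:FLip} to absorb the $x$-vs-$y$ discrepancy $F_i(\bar x_i,X_i)-F_i(\bar y_i,Y_i)$ into a term of order $L(\|X_i\|+1)|\bar x_i-\bar y_i| + \Pu_i^-(X_i-Y_i)$, I arrive at
\[
\frac{\delta}{(T-\bar t_i)^2} \le o_\e(1) + \big(|u_{2}(\bar x_i,\bar t_i)|^{p-1}u_{2}(\bar x_i,\bar t_i) - |v_{2}(\bar y_i,\bar t_i)|^{p-1}v_{2}(\bar y_i,\bar t_i)\big)
\]
(with the obvious swap of indices and exponents for $i=2$). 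Here the coupling enters: the right-hand nonlinearity involves the \emph{other} component.

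To close the loop I would exploit that $t\mapsto s|s|^{p-1}$ is nondecreasing and locally Lipschitz on the bounded range of the solutions (this is where $p,q\ge1$ is used: the function $s\mapsto |s|^{p-1}s$ is then $C^1$, in particular Lipschitz on compacts). Writing $M_j := \sup_{\R^N\times(0,T)}(u_j-v_j)$ for $j=1,2$, the right-hand difference is bounded by $C_p\big(u_{2}(\bar x_i,\bar t_i) - v_{2}(\bar y_i,\bar t_i)\big)^+$ up to the $\e$-penalization error, and after sending $\e\to0$ (so $\bar x_i - \bar y_i\to0$) this is at most $C_p\, M_{2} + o(1)$ when $i=1$, and $C_q\, M_{1} + o(1)$ when $i=2$. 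Taking the max over $i$ and using $\sigma = \max\{M_1,M_2\}$, one obtains $\delta/T^2 \le C\,\sigma + o(1)$ — which is not yet a contradiction by itself, so the real trick is iteration in time: restrict attention to a short time slab $[0,T_0]$ with $T_0$ small enough that $C_p C_q T_0^2 < 1$, prove $M_1=M_2=0$ there by the above estimate bootstrapped through the two inequalities (the product $C_pC_q$ appearing because the two nonlinearities feed into each other), and then step forward in time slabs of fixed length to cover all of $[0,T)$. The main obstacle is handling the unboundedness of $\R^N$ cleanly — ensuring the auxiliary maximum is attained and that the weight perturbation can be removed without spoiling ellipticity — and correctly tracking the cross-dependence of the two components so that the smallness condition is on the product $C_pC_q T_0^2$ rather than on each factor separately.
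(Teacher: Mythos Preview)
Your overall architecture---doubling of variables, the parabolic Crandall--Ishii lemma, and the penalizations $\delta/(T-t)$ and $\beta\langle x\rangle$---matches the paper and is correct up to the point where you subtract the two viscosity inequalities. The genuine gap is in the closing step. From the subtraction you obtain only
\[
\frac{\delta}{(T-\bar t)^{2}} \;\le\; C_{p_i}\,\bigl(u_{j}(\bar x,\bar t)-v_{j}(\bar y,\bar t)\bigr)^{+} + o(1)
\;\le\; C_{p_i}\,\sigma + o(1),
\]
and, as you yourself note, this is not a contradiction: the left-hand side carries no term proportional to $\sigma$. Your proposed fix---restrict to a short slab $[0,T_{0}]$ and ``bootstrap'' the two component inequalities so that the product $C_{p}C_{q}T_{0}^{2}$ appears---does not actually close. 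Feeding one inequality into the other still leaves only the penalization $\delta/(T_{0}-\bar t)^{2}$ on the left; the argument never produces an estimate of the form $M_{1}\le C_{p}T_{0}\,M_{2}$, which is what your product condition tacitly assumes. Such an estimate would require a Gronwall/Duhamel step in $t$, and that is not available for viscosity solutions of fully nonlinear equations. So the time-slab iteration, as sketched, cannot yield a contradiction.

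The paper supplies the missing idea by an exponential change of unknowns (Lemma~\ref{lem:CV}): setting $w_{i}=e^{-\nu t}u_{i}$ transforms \eqref{eq:P} into a system with the extra zeroth-order term $+\nu w_{i}$. Running exactly your doubling argument on the transformed system then produces, at the maximum point, the additional contribution
\[
\nu\bigl(u_{i}(\bar x,\bar t)-v_{i}(\bar y,\bar s)\bigr)\;\ge\;\nu\,\frac{\theta}{2}
\]
on the favorable side, while the coupling contributes at most $R^{\,p_{i}-1}(u_{j}-v_{j})\le R^{\,p_{i}-1}\theta/2$ on the other (here $R$ is the common $L^{\infty}$ bound and $\theta\approx\sigma$). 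Choosing $\nu>\max\{R^{\,p-1},R^{\,q-1}\}$ makes the left strictly dominate, and sending the spatial penalization to zero gives the contradiction. In short, the exponential weight is precisely what manufactures the $\sigma$-sized term on the correct side of the inequality; without it (or an equivalent device) the argument cannot be closed.
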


Existence of viscosity solutions to \eqref{eq:P} and \eqref{eq:ic} is guaranteed by the following: 
\begin{Thm}\label{thm:existence }
Assume that $p,q\geq1$ and $pq>1$. 
Let $u_{10}, u_{20}\in BUC(\R^{N})$. 
There exist $T>0$ and a unique viscosity solution $(u_{1},u_{2})$ of \eqref{eq:P} satisfying \eqref{eq:ic} in $\R^N \times[0,T]$.  
Furthermore, if $u_{i0}\geq0$ for $i=1,2$, then $u_{i}\geq0$ for $i=1,2$, as long as the solution exists. 
Moreover, $u_i \in BUC(\R^N\times[0,T))$. 
\end{Thm}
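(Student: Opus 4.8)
The plan is to construct the solution by a standard fixed-point argument in the space $BUC(\mathbb{R}^N \times [0,T])$, using the comparison principle (Theorem~\ref{thm:CP}) for uniqueness and the existence theory for scalar fully nonlinear parabolic equations as the building block. First I would recall that for each scalar Cauchy problem
\begin{equation*}
\partial_t w + F_i(x, D^2 w) = g(x,t), \qquad w(\cdot,0) = w_0 \in BUC(\mathbb{R}^N),
\end{equation*}
with $g \in BUC(\mathbb{R}^N \times [0,T])$, there is a unique viscosity solution in $BUC(\mathbb{R}^N\times[0,T])$; this follows from Perron's method together with the scalar comparison principle under assumptions \eqref{eq:Fconti}--\eqref{eq:Fhom} (barrier functions giving the modulus of continuity come from the ellipticity \eqref{eq:ellipticity} and the Lipschitz bound \eqref{eq:FLip}). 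Denote by $S_i[g]$ the solution operator. Then a pair $(u_1,u_2)$ solves \eqref{eq:P}--\eqref{eq:ic} precisely when $u_1 = S_1[\,|u_2|^{p-1}u_2\,]$ and $u_2 = S_2[\,|u_1|^{q-1}u_1\,]$, i.e.\ $(u_1,u_2)$ is a fixed point of the map $\Phi(u_1,u_2) = \big(S_1[|u_2|^{p-1}u_2],\, S_2[|u_1|^{q-1}u_1]\big)$.

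Next I would set up the contraction. Fix $R > \max\{\|u_{10}\|_\infty, \|u_{20}\|_\infty\}$ and let
\begin{equation*}
X_{T,R} = \{(u_1,u_2) \in BUC(\mathbb{R}^N\times[0,T])^2 : \|u_i\|_\infty \le R,\ u_i(\cdot,0) = u_{i0}\},
\end{equation*}
a complete metric space under the sup norm. Using the scalar comparison principle against the constant (sub/super)solutions $\pm(\|w_0\|_\infty + t\|g\|_\infty)$ — valid since $F_i(x,0)=0$ by homogeneity \eqref{eq:Fhom} — one gets $\|S_i[g]\|_\infty \le \|w_0\|_\infty + T\|g\|_\infty$. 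Since $t\mapsto|t|^{p-1}t$ is locally Lipschitz, on the ball of radius $R$ it has Lipschitz constant $C_R = p R^{p-1}$ (resp.\ $qR^{q-1}$), so $\||u_2|^{p-1}u_2\|_\infty \le R^p$ and for two elements $\||u_2|^{p-1}u_2 - |\tilde u_2|^{p-1}\tilde u_2\|_\infty \le C_R \|u_2 - \tilde u_2\|_\infty$; a second application of comparison (to the difference, bounded by $\pm T\|g-\tilde g\|_\infty$) yields $\|S_i[g]-S_i[\tilde g]\|_\infty \le T\|g-\tilde g\|_\infty$. Choosing $T$ small so that $T\max\{R^p,R^q\} \le R - \max\{\|u_{10}\|_\infty,\|u_{20}\|_\infty\}$ and $T\max\{pR^{p-1}, qR^{q-1}\} < 1$ makes $\Phi$ a contraction of $X_{T,R}$ into itself; the Banach fixed point theorem gives a unique fixed point, hence a viscosity solution $(u_1,u_2) \in BUC(\mathbb{R}^N\times[0,T])^2$. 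Uniqueness among all $L^\infty$ viscosity solutions (not just those of norm $\le R$) then follows directly from the system comparison principle, Theorem~\ref{thm:CP}, applied in both directions.

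For the nonnegativity and preservation claims: if $u_{i0}\ge 0$, observe that $(0,0)$ is a viscosity subsolution of \eqref{eq:P} (since $F_i(x,0)=0$ and the right-hand sides vanish), so Theorem~\ref{thm:CP} with subsolution $(0,0)$ and supersolution $(u_1,u_2)$ gives $u_i \ge 0$ wherever the solution is defined; this is why one may drop the absolute values and read $|u_2|^{p-1}u_2 = u_2^p$, $|u_1|^{q-1}u_1 = u_1^q$. Finally, extending to a maximal time: iterating the local construction starting from $(u_1(\cdot,T), u_2(\cdot,T)) \in BUC^2$ and continuing as long as the $L^\infty$ norm stays finite yields the solution "as long as it exists." The main obstacle I anticipate is not the fixed-point bookkeeping but verifying the input scalar existence/uniqueness result precisely under the stated hypotheses — in particular producing the uniform-in-$t$ modulus of continuity so that $S_i[g]$ genuinely lands in $BUC(\mathbb{R}^N\times[0,T])$ rather than merely $C \cap L^\infty$; this needs the standard doubling-of-variables barrier estimate, using \eqref{eq:ellipticity} for spatial regularity and \eqref{eq:FLip} to control the $x$-dependence, plus equicontinuity in $t$ from the equation itself.
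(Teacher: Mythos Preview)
Your approach is correct but takes a genuinely different route from the paper's. The paper does \emph{not} build a solution operator for the inhomogeneous scalar problem $\partial_t w + F_i(x,D^2w)=g$. Instead it works with the semigroup $S_i(t)$ generated by the \emph{Pucci extremal operator} $\mathcal{P}_i^-$ (whose concavity gives access to classical regularity), forms the Duhamel fixed point
\[
\Phi_1[v_2](t)=S_1(t)u_{10}+\int_0^t S_1(t-s)\bigl(|v_2(s)|^{p-1}v_2(s)\bigr)\,ds,
\]
and then shows this fixed point is only a viscosity \emph{supersolution} of \eqref{eq:P} (via $\mathcal{P}_i^-\le F_i$ and sublinearity of $\mathcal{P}_i^-$); repeating with $\mathcal{P}_i^+$ produces a subsolution, and the actual solution comes from Perron's method sandwiched between the two. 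Your scheme is more direct --- freeze the coupling, solve the true $F_i$-equation, and the fixed point \emph{is} the solution with no Perron step --- at precisely the cost you identify: one must supply the scalar $BUC$ existence/uniqueness for $\partial_t w + F_i(x,D^2w)=g$ with $g\in BUC$, plus the Lipschitz estimate $\|S_i[g]-S_i[\tilde g]\|_\infty\le T\|g-\tilde g\|_\infty$ (which does follow from scalar comparison against $\tilde w \pm t\|g-\tilde g\|_\infty$). The paper's detour through the Pucci operators trades that black box for off-the-shelf regularity of convex/concave equations, at the price of the extra Perron layer. The final $BUC$ regularity is obtained in the paper by a separate translation-and-comparison argument, essentially the same mechanism you would use to verify that $S_i[g]$ lands in $BUC$.
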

We temporarily go back to the results obtained by Meneses and Quaas~\cite{MQ1,MQ2}. 
They treated \eqref{eq:sF} for the case $F=F(D^{2}u)$ is an $x$-independent operator which satisfies \eqref{eq:Fconti}--\eqref{eq:Fhom} with the ellipticity constants 
$0<\lambda\leq \Lambda$. 
Then there exists $\alpha=\alpha(F)>0$ such that 
if $1<p \leq 1+1/\alpha$, then there exists no global-in-time solutions for any $u_{0}\in BUC(\R^{N})$. 
While if $p> 1+1/\alpha$, then there exists a global-in-time solution for some $u_{0}\in BUC(\R^{N})$. 
These results mean that $1+1/\alpha$ is the Fujita exponent for \eqref{eq:sF}. 
\begin{Remark}
We give several remarks about $\alpha=\alpha(F)$. 
\begin{enumerate}
\item
If $F$ and $G$ are uniformly elliptic homogeneous such that $F\leq G$, then
\[
\alpha(F) \leq \alpha(G). 
\]
Moreover, if $0<\lambda \leq \Lambda$ are the ellipticity constants of $F$, 
then it holds that 
\[
\frac{N\lambda}{2\Lambda} 
\leq
\alpha(\Pu_{\lambda,\Lambda}^{-}) 
\leq \alpha(F) 
\leq 
\alpha(\Pu_{\lambda,\Lambda}^{+})
\leq
\frac{N\Lambda }{2\lambda}. 
\]
See \cite[(3.21)]{AT} and \cite[Lemma~2.2]{MQ2}. 
\item
If $F=F(X)$ is convex, then 
$ \alpha(F) \geq N/2$ 
and this inequality is strict unless $F$ is linear (see \cite[Example~3.12]{AT}). 
\item
Note that $\alpha$ coincides with the eigenvalue of 
\begin{equation}\label{eq:EVP1}
F(D^{2}\psi) - \frac{1}{2}y\cdot \nabla \psi = \alpha \psi, \quad y\in \R^{N}. 
\end{equation}
\end{enumerate}
\end{Remark}
We next give a remark for the case $F=F(x,D^{2}u)$ depending on $x$. 
In~\cite{MQ2}, it was shown that there exists $\tilde{\alpha}=\tilde{\alpha}(F)>0$ such that 
for all solutions of 
\begin{equation*}%\label{eq:heat}
\partial_{t}w + F(x,D^{2}w)=0, \quad x\in \R^{N}, \ t>0, \quad w(x,0)=w_{0}(x), 
\quad x\in \R^{N}
\end{equation*}
satisfy 
\begin{equation*}%\label{eq:MQ1}
\lim_{t\to\infty} t^{\tilde{\alpha}} \|w(\cdot,t)\|_{L^{\infty}} <\infty, 
\end{equation*}
whenever 
$w_{0}\in BUC(\R^{N})$ satisfies $0\leq w_{0}(x) \leq A\exp(-B|x|^{2})$ for some $A,B>0$. 
While if $\beta>\tilde{\alpha}$, it holds that 
\begin{equation*}%\label{eq:MQ2}
\lim_{t\to\infty} t^{\beta} \|w(\cdot,t)\|_{L^{\infty}} =\infty. 
\end{equation*}
This is well-known for the case $F(D^{2}u)=-\bigtriangleup u$ (see e.g. \cite{Me}). 
For $x$ depending case, the Fujita exponent is also given by $1+1/\tilde{\alpha}$. 
However, the critical case $p=1+1/\tilde{\alpha}$ has never been treated for $x$ depending case even for the single equation. 

In this paper, we would like to prove the existence of global-in-time solutions for the system of fully nonlinear parabolic equations~\eqref{eq:P}. 
In our setting, we have the choices of a combination of $F_{1}$ and $F_{2}$. 
Let $\alpha_{i}=\alpha(F_{i})>0$ be the corresponding eigenvalue of \eqref{eq:EVP1} replacing $F$ by $F_{i}$. 
As mentioned above, $\alpha_{i}>N/2$ if $F_{i}$ is convex and nonlinear. 
Therefore, we can expect a different condition to guarantee the existence of a global-in-time solution from \eqref{eq:SCGS}. 

Our main result is the following: 
\begin{Thm}\label{thm:main}
Let $F_{1}$, $F_{2}$ be independent of $x$. 
Suppoe that $F_{i}$'s satisfy \eqref{eq:Fconti}--\eqref{eq:Fhom}. 
Further assume that $p,q\geq 1$ satisfy $pq>1$ and 
\begin{equation}\label{eq:pqellip}
p > \frac{\Lambda_2}{\lambda_1}, \quad
q > \frac{\Lambda_1}{\lambda_2}. 
\end{equation}
There exist positive constants $\alpha_{1}$ and 
$\alpha_{2}$ such that, if 
\begin{equation}\label{eq:pqineq}
\frac{p+1}{pq-1} < \alpha_{1} \quad \mathit{and} \quad 
\frac{q+1}{pq-1} < \alpha_{2}, 
\end{equation}
then there exists a global-in-time solution to \eqref{eq:P} and \eqref{eq:ic} for some $u_{10}$, $u_{20}\in BUC(\R^N)$. 
\end{Thm}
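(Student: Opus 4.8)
The strategy is to build, from the self-similar profiles of $F_1$ and $F_2$, an explicit bounded, nonnegative supersolution of \eqref{eq:P} that lives on all of $\R^N\times[0,\infty)$, and then to combine the local solvability (Theorem~\ref{thm:existence }) with the resulting a priori bound and the comparison principle (Theorem~\ref{thm:CP}). Since $F_i=F_i(X)$ is uniformly elliptic and positively homogeneous of degree one, the eigenvalue problem \eqref{eq:EVP1} (with $F$ replaced by $F_i$) possesses a positive radial solution $\psi_i\in C^{2}(\R^N)$, bounded after normalization, with principal eigenvalue $\alpha_i=\alpha(F_i)>0$; see \cite{AT,MQ1,MQ2}. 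The decay of $\psi_i$ is Gaussian with rate trapped between the two ellipticity constants: for every $\e>0$ there exist $b_{i,\e},B_{i,\e}>0$ with
\begin{equation*}
b_{i,\e}\exp\!\left(-\left(\tfrac{1}{4\la_i}+\e\right)|y|^2\right)\le\psi_i(y)\le B_{i,\e}\exp\!\left(-\left(\tfrac{1}{4\La_i}-\e\right)|y|^2\right),\qquad y\in\R^N .
\end{equation*}
Using \eqref{eq:Fhom}, a direct computation shows that for any $\mu_i>0$ and any $a_i\in(0,\alpha_i)$ the function $\bar u_i(x,t):=\mu_i(1+t)^{-a_i}\psi_i\big(x/\sqrt{1+t}\,\big)$ satisfies, classically on $\R^N\times[0,\infty)$,
\begin{equation*}
\partial_t\bar u_i+F_i(D^2\bar u_i)=\mu_i(\alpha_i-a_i)(1+t)^{-a_i-1}\psi_i\!\left(\frac{x}{\sqrt{1+t}}\right)\ge0 .
\end{equation*}

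Next I choose $a_1:=\dfrac{p+1}{pq-1}$ and $a_2:=\dfrac{q+1}{pq-1}$; these are positive (as $pq>1$), satisfy the identities $a_2p=a_1+1$ and $a_1q=a_2+1$, and by \eqref{eq:pqineq} obey $a_1<\alpha_1$, $a_2<\alpha_2$. With this choice the powers of $(1+t)$ match on both sides of each equation of \eqref{eq:P}, so $(\bar u_1,\bar u_2)$ is a supersolution of \eqref{eq:P} as soon as
\begin{equation*}
\mu_1(\alpha_1-a_1)\psi_1(y)\ge\mu_2^{\,p}\psi_2(y)^{p}\quad\text{and}\quad\mu_2(\alpha_2-a_2)\psi_2(y)\ge\mu_1^{\,q}\psi_1(y)^{q}\qquad(y\in\R^N).
\end{equation*}
By the Gaussian bounds above, $\psi_1(y)/\psi_2(y)^{p}\ge c_1\exp(\theta_1|y|^2)$ with $\theta_1=\tfrac{p}{4\La_2}-\tfrac{1}{4\la_1}-\e(p+1)$; the strict inequality $p>\La_2/\la_1$ from \eqref{eq:pqellip} makes $\theta_1>0$ for $\e$ small, whence $\inf_{\R^N}\psi_1/\psi_2^{p}>0$, and symmetrically $\inf_{\R^N}\psi_2/\psi_1^{q}>0$ using $q>\La_1/\la_2$. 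Therefore both displayed inequalities hold once $\mu_1$ is taken sufficiently small and $\mu_2$ comparably small (say $\mu_2$ a fixed multiple of $\mu_1^{\,q}$); the joint solvability of the two conditions then reduces to an inequality of the type $\mu_1^{\,pq-1}\le\text{const}$, admissible because $pq>1$.

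Fixing such $\mu_1,\mu_2$, set $u_{i0}:=\mu_i\psi_i$. Then $u_{i0}\ge0$ and $u_{i0}\in BUC(\R^N)$ (a continuous function on $\R^N$ vanishing at infinity is bounded and uniformly continuous), so Theorem~\ref{thm:existence } gives, for some $T>0$, a unique nonnegative viscosity solution $(u_1,u_2)$ of \eqref{eq:P}, \eqref{eq:ic} on $\R^N\times[0,T]$. On each such interval $(\bar u_1,\bar u_2)$ is a classical, hence viscosity, supersolution with $u_{i0}\le\bar u_i(\cdot,0)$, so Theorem~\ref{thm:CP} yields $0\le u_i\le\bar u_i$ on $\R^N\times[0,T)$. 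Since $\bar u_i$ is bounded on all of $\R^N\times[0,\infty)$, the solution cannot blow up in the sup norm; by the standard continuation/blow-up alternative of the local theory it therefore extends to a solution on $\R^N\times[0,\infty)$, and the bound $0\le u_i\le\bar u_i$ persists. This produces the global-in-time solution for the initial data $(u_{10},u_{20})=(\mu_1\psi_1,\mu_2\psi_2)$.

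I expect the main obstacle to be the precise two-sided Gaussian control of the self-similar profiles $\psi_i$ in terms of $\la_i,\La_i$, and the observation, built into \eqref{eq:pqellip}, that these exponents are exactly what is needed for $\psi_2^{\,p}$ and $\psi_1^{\,q}$ to be dominated by $\psi_1$ and $\psi_2$, the strictness in \eqref{eq:pqellip} providing the slack to absorb the unavoidable $\e$-loss in the estimates. By contrast, the reduction of the supersolution condition to the pointwise inequalities above, and the continuation of the local solution to a global one via the a priori bound, are routine.
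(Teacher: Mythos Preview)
Your proposal is correct and follows essentially the same route as the paper: both build a global supersolution of the self-similar form $\bar u_i(x,t)=\mu_i(1+t)^{-a_i}\psi_i\big(x/\sqrt{1+t}\,\big)$ with $a_1=\tfrac{p+1}{pq-1}$, $a_2=\tfrac{q+1}{pq-1}$, reduce the supersolution property to the pointwise bounds $\psi_2^{\,p}/\psi_1,\ \psi_1^{\,q}/\psi_2\in L^\infty(\R^N)$, and deduce these bounds from the two-sided Gaussian estimates of \cite[Lemma~3.10]{AT} together with \eqref{eq:pqellip}. The only cosmetic differences are that the paper phrases the ansatz as $\e(t+1)^{a}\phi_i(x,t+1)$ with $\phi_i(x,t)=t^{-\alpha_i}\psi_i(x/\sqrt t)$ and verifies the supersolution inequality in the viscosity sense via test functions (using only $\phi_i\in C^{1,1}$), whereas you compute classically assuming $\psi_i\in C^2$; and the paper concludes by invoking Perron's method directly, while you run local existence plus comparison plus continuation---these are interchangeable.
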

Our theorem gives a sufficient condition for the exsistence of global-in-time solutions to \eqref{eq:P} similar to the Fujita type result. 
We consider a solution $\psi$ of the problem
\[
F(D^{2}\psi) -\frac{1}{2}y\cdot D\psi = \mu \psi, \quad y\in \R^{N}, \quad
\lim_{|y|\to\infty}\psi(y)=0, 
\]
where $F$ satisfies \eqref{eq:Fconti}--\eqref{eq:Fhom}. 
To prove our main theorem, let us apply an estimate for $\psi$ 
of the form 
\[
c \exp(-\delta|y|^2) \leq \psi(y) \leq C\exp(-\delta|y|^2). 
\]
Using this estimate under the condtion~\eqref{eq:pqellip}, we can find a supersolution to obtain global-in-time solutions. 
See Section~4. 
Note that~\eqref{eq:pqellip} is not needed for the single equation. 
%
%We employ an estimate for 
%a positive self-similar solution $\Phi$ of $\partial_{t}u +F(D^{2}u)=0$, in order to construct a global-in-time solution of \eqref{eq:P} 

The rest of this paper is organized as follows. 
In Section~2, we give a precise definition of viscosity solutions of the Cauchy problem and prepare several notation. 
In Section~3, we give a proof of the existence of local-in-time solutions of \eqref{eq:P} and \eqref{eq:ic}. 
In Section~4, we prove Theorem~\ref{thm:main}. 
We give a proof of Theorem~\ref{thm:CP} in Section~5. 
In the appendix, for the convenience of the reader, we give a detailed proof of Perron's method. 

%%%%%%%%%%%%%%%%%%%%%%%%%%%%%%%%%%%%%
%%%%%%%%%%%%%%%%%%%%%%%%%%%%%%%%%%%%%
\section{Preliminaries}
%%%%%%%%%%%%%%%%%%%%%%%%%%%%%%%%%%%%%
%%%%%%%%%%%%%%%%%%%%%%%%%%%%%%%%%%%%%

For a real valued function $f$ defined in $\R^N \times(0,T)$, 
define the upper (resp. lower) semi-continuous envelope $f^*$ (resp. $f_*$) of $f$  by 
\begin{equation}\label{usce}
\begin{aligned}
f^*(x,t) := \lim_{\e\to0}\sup_{\substack {y\in B(x,\e) \\ |s-t|<\e}} f(y,s),  \quad 
f_*(x,t) := \lim_{\e\to0}\inf_{\substack {y\in B(x,\e) \\ |s-t|<\e}} f(y,s), 
\end{aligned}
\end{equation}
for $x\in \R^N$, $t\in(0,T)$. 
It is well known that $f^*$ is upper semi-continuous, $f_*$ is lower semi-continuous and 
$f_* \leq f \leq f^*$. 
Furthermore, if $f$ is upper semi-continuous, then $f^*=f$. 
The same property holds for $f_*$.

We prepare some notation. 
Let $A$ be a subset of $\R^{N}\times[0,\infty)$. 
The sets 
$USC(A)$ and $LSC(A)$ stand for the set of upper semicontinuous functions on $A$ and lower semicontinuous functions on $A$ respectively. 
Let $\Omega\subset \R^{N}$ and 
let $BUC(\Omega)$ denote the set of bounded uniformly continuous functions on $\Omega$. 
For $T>0$, $C^{2,1}=C^{2,1}(\R^n\times(0,T))$ denotes the set of all functions which is $C^2$ in the variable $x$  and $C^1$ in the variable $t$.

%Asssume that $F$ satisfy \eqref{eq:ellipticity} and \eqref{eq:FLip}. 
%By  \cite[Proposition~3.8 (2)]{Ko}, there exists a modulus of continuity $\omega_{F}:[0,\infty)\to\R$ 
%such that, if 
%$X$, $Y\in S^N$, $\mu>1$ satisfy 
%\begin{equation}\label{eq:mat.ineq0}
%\begin{aligned}
%-3\mu
%\begin{pmatrix}
%I & O\\
%O & I
%\end{pmatrix}
%\leq 
%\begin{pmatrix}
%X & O\\
%O & -Y
%\end{pmatrix}
%\leq 
%3\mu
%\begin{pmatrix}
%I & -I\\
%-I & I
%\end{pmatrix},  
%\end{aligned}
%\end{equation}
%then 
%\[
%F (y,Y) - F(x,X)
%\leq 
%\omega_{F} 
%\left(
%|x-y| + \mu|x-y|^2
% \right) 
%\]
%for all $x,y\in\R^N$. 
%Move to Section~5. 

We recall the definition of viscosity solutions of general parabolic systems
\begin{align}\label{eq:generaleq}
\partial_{t}u_{i}+G_{i}(x,t,u_{1},\ldots,u_{m},Du_{i},D^{2}u_{i})=0, \quad\mbox{in } \Omega\times (0,T),\ \mbox{for } i=1,\ldots,m, 
\end{align}
where  $T\in(0,\infty]$ and $\Omega$ is an open subset of $\R^{N}$.

\begin{Def}\label{dfn:vsol}
We call $u=(u_{1},\ldots,u_{m}):\Omega\times (0,T)\to\R^{m}$ a viscosity subsolution (resp., supersolution) of \eqref{eq:generaleq} if for $(i,x,t,\phi)\in\{1,\ldots,m\}\times\Omega\times (0,T)\times C^{2,1}(\Omega\times (0,T))$,
\[
\begin{aligned}
\partial_{t}\phi(x,t)+G_{i}(x,t,u_{1}^*(x,t),\ldots,u_{m}^*(x,t),D\phi(x,t),D^{2}\phi(x,t))
&\leq0,\\
(resp. \ \partial_{t}\phi(x,t)+G_{i}(x,t,{u_{1}}_*(x,t),\ldots,{u_{m}}_*(x,t),D\phi(x,t),D^{2}\phi(x,t))
&\geq0)\\
\end{aligned}
\]
provided that $u_{i}^*-\phi$ (resp. ${u_{i}}_* -\phi$)  attains its local maximum (resp., minimum) at $(x,t)\in\Omega\times(0,T)$. We call $u:\Omega\times (0,T)\to\R^{m}$ a viscosity solution of \eqref{eq:generaleq} if $u$ is a viscosity sub- and supersolution of \eqref{eq:generaleq}.
\end{Def}
We also define a solution to the Cauchy problem. 
\begin{Def}
Let $u=(u_{1},\ldots,u_{m}):\Omega\times (0,T)\to\R^{m}$ be a viscosity subsolution of \eqref{eq:generaleq}. 
We call $u$ a viscosity subsolution of the Cauchy problem \eqref{eq:generaleq} and 
\[
u_1(\cdot,0)=u_{10},\dots,u_m(\cdot,0)=u_{m0} 
\]
if $u$ satisfies 
\begin{equation*}%\label{eq:generalIV}
u_1(\cdot,0) \leq u_{10},\dots, u_m(\cdot,0) \leq u_{m0} \quad \mathrm{in} \ \Omega. 
\end{equation*}
A viscosity supersolution is also defined in the same way. 
\end{Def}
\begin{Def}
Define parabolic semi-jet $PJ^{2,+}u(x,t)$ of a function 
$u:\R^N\times(0,\infty)\to\R$ at $(x,t)\in\R^N\times[0,\infty)$ 
by 
\begin{equation}\label{eq:jet}
\begin{aligned}
&PJ^{2,+}u(x,t) \\
&:=
\biggl\{
(a,z,X)\in \R\times\R^N\times S^N \ \biggl| \ 
u(y,s) \leq u(x,t) + \langle z,y-x\rangle\\
& \quad+ 
\frac{1}{2}\langle X(y-x),y-x\rangle+ a(s-t) +o(|y-x|^2 +|s-t|) 
\quad \mathrm{as} \ y\to x, s\to t. 
\biggr\}, 
\end{aligned}
\end{equation}
where $\langle\cdot,\cdot\rangle$ denotes the standard innnar product on $\R^N$. 
We also define 
$PJ^{2,-}u(x,t) := -PJ^{2,+}(-u(x,t))$. 
Moreover, a sort of 
closure of semi-jet $\overline{PJ}^{2,\pm}u(x,t)$ is defined as follows: 
$(a,z,X)\in \R\times\R^N\times S^N$ is a point of 
$\overline{PJ}^{2,\pm}u(x,t)$ if there exist 
sequences 
$(x_k,t_k)\in \R^N\times(0,\infty)$ and $(a_k,z_k,X_k)\in PJ^{2,\pm}u(x,t)$ such that 
\[
x_k \to x, \quad t_k\to t, \quad  u(x_k,t_k) \to u(x,t), \quad a_k \to a, 
\quad
z_k \to z, \quad 
X_k \to X 
\]
as $k\to\infty$. 
\end{Def}

%%%%%%%%%%%%%%%%%%%%%%%%%%%%%%%%%%%%%
%%%%%%%%%%%%%%%%%%%%%%%%%%%%%%%%%%%%%
\section{Existence of local-in-time solutions}
%%%%%%%%%%%%%%%%%%%%%%%%%%%%%%%%%%%%%
%%%%%%%%%%%%%%%%%%%%%%%%%%%%%%%%%%%%%
In this section, we give proof of Theorem~\ref{thm:existence }. 
To prove the local existence of viscosity solution, 
we refer to important results from \cite{CL}. 
The following Lemma is modified for the convenience of our argument. 
\begin{Lemma}\label{lem:CP1}
Let $F:S^N \to\R$ be continuous and let satisfy the ellipticity condition 
\begin{equation}\label{eq:Fellip}
F(Y) \leq F(X) \quad \mathrm{whenever} \ X\leq Y, \quad X, Y\in S^{N}. 
\end{equation}
\begin{enumerate}
\item
If $u_{0}$ is uniformly continuous on $\R^{N}$, then the Cauchy problem 
\begin{equation}\label{eq:homCP}
\partial_{t} u + F(D^{2}u) = 0 \quad \mathrm{in} \ \R^{N}, \quad 
u(\cdot,0) = u_{0} \quad \mathrm{in} \ \R^{N}
\end{equation}
has a unique viscosity solution $u\in C(\R^{N}\times[0,\infty))$, which is uniformly continuous in $x\in \R^{N}$. 
Moreover, if $u_{0} \in BUC(\R^{N})$, 
then the unique solution $u$ of \eqref{eq:homCP} is bounded and uniformly continuous in 
$x\in \R^{N}$. 
\item
Assume $u_{0}\in BUC(\R^{N})$. 
Then the solution $u$ of \eqref{eq:homCP} generates a semigroup 
$\{S(t)\}_{t\geq0}$ on $BUC(\R^{N})$, which satisfies the following properties.  
\begin{enumerate}
\item
For any $\vphi, \psi \in BUC(\R^{N})$, 
\[
\|S(t)\vphi - S(t) \psi\|_{L^{\infty}} \leq \|\vphi - \psi\|_{L^{\infty}},  \quad t>0. 
\]

\item
For any $\vphi\in BUC(\R^{N})$, 
\[
\lim_{t\to +0} \| S(t) \vphi - \vphi\|_{L^{\infty}} = 0. 
\]
\end{enumerate}
\end{enumerate}
\end{Lemma}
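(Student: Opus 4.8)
The plan is to obtain the lemma from the well-posedness theory for the autonomous Cauchy problem \eqref{eq:homCP} together with the comparison principle for a single (possibly degenerate) parabolic equation: the hard analytic content, existence and uniqueness of a viscosity solution of \eqref{eq:homCP}, is contained in \cite{CL}, and what remains is to extract from it the regularity and semigroup assertions.

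For (i) I would recall the existence proof by Perron's method. The function $u$ defined as the supremum of all viscosity subsolutions of \eqref{eq:homCP} lying below a fixed supersolution is a viscosity solution, and the comparison principle both forces $u$ to be the unique solution and, through barriers, shows that $u$ attains the initial datum, i.e.\ $u_{*}(\cdot,0)=u^{*}(\cdot,0)=u_{0}$. Admissible barriers can be taken of the form
\[
(x,t)\longmapsto u_{0}^{\delta}(x_{0})\pm K_{\delta}\sqrt{\e^{2}+|x-x_{0}|^{2}}\pm C_{\e,\delta}\,t ,
\]
where $u_{0}^{\delta}$ is a Lipschitz approximation of $u_{0}$ with constant $K_{\delta}$ and $C_{\e,\delta}\ge\sup\{\,|F(X)|:\|X\|\le K_{\delta}/\e\,\}$; since $\|D^{2}\sqrt{\e^{2}+|x-x_{0}|^{2}}\|\le 1/\e$, these are classical, hence viscosity, super- and subsolutions whose initial traces bracket $u_{0}^{\delta}$, so, truncating them far from $x_{0}$ so that the comparison principle applies and letting $\delta\to0$, one gets a unique $u\in C(\R^{N}\times[0,\infty))$. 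For the remaining claims in (i), note that \eqref{eq:homCP} is invariant under translations in $x$ and under the addition of a constant, and that constants are solutions (as $F(0)=0$). Hence, if $\omega$ is a modulus of continuity of $u_{0}$, comparing the solution with datum $u_{0}(\cdot+h)$, namely $u(\cdot+h,\cdot)$, against $u(\cdot,\cdot)\pm\omega(|h|)$ gives $\|u(\cdot+h,t)-u(\cdot,t)\|_{L^{\infty}}\le\omega(|h|)$ for all $t\ge0$, so $u(\cdot,t)$ inherits $\omega$ uniformly in $t$; and, since $\inf_{\R^{N}}u_{0}$ and $\sup_{\R^{N}}u_{0}$ are themselves (constant) solutions, the comparison principle gives $\inf_{\R^{N}}u_{0}\le u\le\sup_{\R^{N}}u_{0}$, so $u$ is also bounded when $u_{0}\in BUC(\R^{N})$.

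For (ii), set $S(t)u_{0}:=u(\cdot,t)$. The identity $S(t+s)=S(t)S(s)$ follows from uniqueness and the autonomy of \eqref{eq:homCP}, since $u(\cdot,t+s)$ and the solution at time $t$ with datum $u(\cdot,s)$ solve the same problem. For (1), put $w=S(t)\vphi$, $\tilde w=S(t)\psi$; then $w$ and $\tilde w+\|\vphi-\psi\|_{L^{\infty}}$ are both solutions and $\vphi\le\psi+\|\vphi-\psi\|_{L^{\infty}}$ at $t=0$, so comparison gives $w\le\tilde w+\|\vphi-\psi\|_{L^{\infty}}$; exchanging $\vphi$ and $\psi$ gives the reverse estimate. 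For (2), consider first $\vphi$ Lipschitz with constant $K$: the barriers above, centred at an arbitrary $x_{0}$, give $|u(x_{0},t)-\vphi(x_{0})|\le K\e+C_{\e}t$ for every $\e>0$, whence $\limsup_{t\to+0}\|S(t)\vphi-\vphi\|_{L^{\infty}}\le K\e$, and letting $\e\to0$ proves (2) in this case; for general $\vphi\in BUC(\R^{N})$, choose Lipschitz $\vphi^{\delta}$ with $\|\vphi-\vphi^{\delta}\|_{L^{\infty}}\le\delta$ and combine the Lipschitz case with (1) and the triangle inequality to get $\limsup_{t\to+0}\|S(t)\vphi-\vphi\|_{L^{\infty}}\le2\delta$, then send $\delta\to0$.

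The truly delicate facts --- the comparison principle and existence for the fully nonlinear autonomous problem --- are imported from \cite{CL}, so the main obstacle is not a deep one. What requires care is the explicit construction and verification of the one-sided barriers for a merely degenerate elliptic $F$ (so that no uniform ellipticity enters this lemma), the argument that the Perron solution attains its initial data, and the routine but slightly technical passage from Lipschitz to uniformly continuous data in the initial-layer estimate used for (2).
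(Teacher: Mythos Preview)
The paper does not supply its own proof of this lemma: it is introduced with the sentence ``we refer to important results from \cite{CL}'' and is then used as a black box in the proof of Theorem~\ref{thm:existence }. Your proposal therefore goes beyond what the paper does. The route you sketch --- Perron's method with explicit one-sided barriers for existence and attainment of the initial datum, then translation invariance plus comparison to propagate the spatial modulus of continuity, then comparison with shifted solutions for the $L^\infty$-contraction, and a Lipschitz-to-$BUC$ density argument for the initial-layer estimate --- is the standard one and is in substance what \cite{CL} carries out. Your identification of \cite{CL} as the source of the hard analysis matches the paper exactly.

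One small remark: you invoke $F(0)=0$ (``constants are solutions'') to obtain the two-sided bound $\inf u_{0}\le u\le\sup u_{0}$, but the lemma as stated assumes only continuity and the degenerate ellipticity \eqref{eq:Fellip}, not positive homogeneity. In the paper's actual applications the operator is $\Pu_{i}^{\pm}$, so $F(0)=0$ does hold; but strictly speaking the boundedness assertion in (i), and your argument for it, require this extra hypothesis. Without it one must replace constant barriers by the affine-in-time functions $M-F(0)t$, and boundedness then holds only on bounded time intervals. This is a cosmetic gap in the lemma's formulation rather than a flaw in your reasoning.
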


\begin{proof}[Proof of Theorem~\ref{thm:existence }]
The proof is based on~\cite{MQ1}. 
It can be seen that $\Pu_{i}^-$ satisfies~\eqref{eq:Fellip} by the definition.  
Let $\{S_{i}(t)\}$ be an order preserving semigroup generated by 
$\Pu_{i}^{-}$. 
Then, by Lemma~\ref{lem:CP1} for each $i=1,2$, $z_{i}(x,t) = [S_{i}(t)u_{i0}](x)$ is a viscosity solution to 
\[
\partial_{t}z_{i} + \Pu_{i}^{-}(D^{2}z_{i})=0, \quad x\in \R^{N}, \, t>0,  \quad
z_{i}(x,0) = u_{i0}(x), \quad x\in \R^{N}. 
\]
Furthermore, $S_{i}(t)$ 
satisfies 
\begin{equation}\label{eq:nonexpansion}
\|S_{i}(t)\vphi - S_{i}(t) \psi\|_{L^{\infty}} \leq \|\vphi - \psi\|_{L^{\infty}}, \quad t>0
\end{equation}
for any $\vphi, \psi \in BUC(\R^{N})$ 
and 
\begin{equation*}%\label{eq:Sic}
\lim_{t\to +0} \| S_{i}(t) \vphi - \vphi\|_{L^{\infty}} = 0. 
\end{equation*}

%Without loss of generality, we may assume $p>1$. 
Let $T>0$. 
Define~$\Psi: (BUC(\R^{N}\times[0,T])^{2} \to (BUC(\R^{N}\times[0,T])^{2}$ by 
\[
\Psi[v_{1}, v_{2}](t):= (\Phi_{1}[v_{2}](t), \Phi[v_{1}](t)), \quad 0\leq t \leq T, 
\]
where 
\[
\begin{aligned}
\Phi_{1}[v_{2}](t) 
&:= S_{1}(t)u_{10} + \int_{0}^{t} S_{1}(t-s) (|v_{2}(s)|^{p-1}v_{2}(s))\,ds,  \\
\Phi_{2}[v_{1}](t) 
&:= S_{2}(t)u_{20} + \int_{0}^{t} S_{2}(t-s) (|v_{1}(s)|^{q-1}v_{1}(s))\,ds. \\
\end{aligned}
\]
For the sake of convenience, we shall show that $\Psi$ is a contraction on a closed subset of $ (BUC(\R^{N}\times[0,T]))^{2}$. 
For $M>0$ and $T>0$, 
the closed ball $B_{T,M}:= \{v\in BUC(\R^{N}\times[0,T]): \sup_{0\leq t \leq T}\|v(t)\|_{L^{\infty}(\R^{N})} \leq M\}$ is a complete metric space. 
Without loss of generality, we may assume $u_{10}\not\equiv0$. 
Moreover, we only need to consider $\Phi_{1}$ due to the symmetry. 
Set $M:= 2(\|u_{10}\|_{L^{\infty}(\R^{N})}+\|u_{20}\|_{L^{\infty}(\R^{N})})>0$. 
Let $v_{2}, \tilde{v}_{2}\in B_{T,M}$. 
Thanks to \eqref{eq:nonexpansion}, we see that 
$\|S_{i}(t)u_{10}\|_{L^{\infty}(\R^{N})}\leq  \|u_{10}\|_{L^{\infty}(\R^{N})}$ for all $t\in[0,T]$ and so 
$S(t)u_{10}\in B_{M}$. 
Moreover, we have 
\[
\left|\int_{0}^{t} S_{1}(t-s) (|v_{2}(s)|^{p-1}v_{2}(s))\,ds\right|
\leq \int_0^t \||v_{2}(s)|^{p-1}v_{2}(s)\|_{L^{\infty}(\R^N)}
\leq tM^{p}
\]
for $t\in[0,T]$. 
Thus, 
\begin{equation}\label{eq:onto}
\|\Phi_{1}[v_{2}](t)\|_{L^{\infty}(\R^{N})} 
\leq \|u_{10}\|_{L^{\infty}(\R^{N})} + TM^{p}. 
\end{equation}
%Then, since $p>1$, we can find $T>0$ so small that 
%$\|u_{10}\|_{L^{\infty}(\R^{N})} + TM^{p}\leq M/2$. 
%Thus, $\Phi_{1}$ maps $B_{M}$ into itself. 
We next use \eqref{eq:nonexpansion} to see  
\[
\begin{aligned}
|\Phi_{1}[v_{2}](t) - \Phi_{1}[\tilde{v}_{2}](t) |
&\leq
 \int_{0}^{t} 
\|
\{
(|v_{2}(s)|^{p-1}v_{2}(s)) -(|\tilde{v}_{2}(s)|^{p-1}\tilde{v}_{2}(s))
\}
\|_{L^{\infty}(\R^{N})}
\,ds. 
\end{aligned}
\]
By the mean value theorem, we see that there exists some $C>0$ such that 
\[
|\Phi_{1}[v_{2}](t) - \Phi_{1}[\tilde{v}_{2}](t) |
\leq 
CT M^{p-1} \sup_{0\leq t\leq T}\|v_{2}(t) -\tilde{v}_{2}(t)\|_{L^{\infty}(\R^{N})} 
\]
for $t\in[0,T]$. 
It follows that 
\begin{equation}\label{eq:cont}
\sup_{0\leq t\leq T }\|\Phi_{1}[v_{2}]-\Phi_{1}[\tilde{v}_{2}]\|_{L^{\infty}(\R^{N})} 
\leq 
CT M^{p-1} \sup_{0\leq t \leq T}\|v_{2}- \tilde{v}_{2}\|_{L^{\infty}(\R^{N})}. 
\end{equation}
Therefore, taking $T>0$ small enough, we see that $\Psi$ is a contraction map on $(B_{T,M})^{2}$. 
By~\eqref{eq:onto} and~\eqref{eq:cont}, the Banach fixed point theorem is applied and there exists a unique fixed point so that 
$\Psi[v_{1},v_{2}]=(\Phi_{1}[v_{2}],\Phi_{2}[v_{1}]) = (v_{1},v_{2})\in B_{T,M}^{2}$. 
%Consequently, $\Psi$ has a unique fixed point $(v_{1}, v_{2})$. 
Namely, we have
\begin{equation}\label{eq:FP}
\begin{aligned}
v_{1}(t) 
&= S_{1}(t)u_{10} + \int_{0}^{t} S_{1}(t-s) (|v_{2}(s)|^{p-1}v_{2}(s))\,ds,  \\
v_{2}(t) 
&= S_{2}(t)u_{20} + \int_{0}^{t} S_{2}(t-s) (|v_{1}(s)|^{q-1}v_{1}(s))\,ds \\
\end{aligned}
\end{equation}
in $\R^{N}\times[0,T]$. 
Furthermore, it follows from \eqref{eq:FP} that 
\[
\|v_{1}(t) - S_{1}(t)u_{10}\|_{L^{\infty}(\R^{N})}
\leq M^{p}t \to0
\]
as $t\to+0$, hence 
\[
\lim_{t\to+0}\|v_{1}(t) - u_{10}\|_{L^{\infty}(\R^{N})} =0. 
\]
We have the same convergence of $v_{2}$ by the same argument. 

By the regularity theory (see e.g. 
\cite[Theorem~1.6, Chapter~13]{K}, 
\cite[Theorem~14.10]{L}, 
\cite[Theorem~4.13]{W}), 
we know that $S_{i}(t)u_{i0}$ is a classical solution of 
$\partial_{t} w + \Pu^{-}_{i}(D^{2}w)=0$. 
It follows from \eqref{eq:FP} that $\partial_{t} v_{1}, \partial_{t}v_{2}$ exist. 
Taking the derivative of the right-hand side of \eqref{eq:FP}, we see that 
\[
\begin{aligned}
&\frac{\partial}{\partial t} [S_{1}(t) u_{10}]
= - \Pu_{1}^{-}(D^{2}[S_{1}(t)u_{10}]), \\
&\frac{\partial}{\partial t} 
\int_{0}^{t}S_{1}(t-s)(|v_{2}(s)|^{p-1}v(s))\,ds\\ 
&=
-\int_{0}^{t} \Pu^{-}_{1} (D^{2}[S_{1}(t-s) (|v_{2}(s)|^{p-1}v_{2}(s))])\,ds 
+ |v_{2}(t)|^{p-1}v_{2}(t). 
\end{aligned}
\]
The same estimate also allows $\partial_{t}v_{2}$ to exist. 
Thus, they satisfy 
\[
\begin{aligned}
\partial_{t} v_{1} + \Pu^{-}_{1}(D^{2}S_{1}(t)u_{10}) 
&= -\int_{0}^{t} \Pu^{-}_{1} (D^{2}[S_{1}(t-s) (|v_{2}(s)|^{p-1}v_{2}(s))])\,ds 
+ |v_{2}|^{p-1}v_{2}(t), \\
\partial_{t} v_{2} + \Pu^{-}_{2}(D^{2}S_{2}(t)u_{20}) 
&= -\int_{0}^{t} \Pu^{-}_{2} (D^{2}[S_{2}(t-s) (|v_{1}(s)|^{q-1}v_{1}(s))])\,ds 
+ |v_{1}|^{q-1}v_{1}(t). \\
\end{aligned}
\]
It follows from the property $\Pu_{i}^{-}(X+Y) \geq \Pu_{i}^{-}(X)+\Pu_{i}^{-}(Y)$, that 
\[
\begin{aligned}
&-\int_{0}^{t} \Pu^{-}_{1} (D^{2}[S_{1}(t-s) (|v_{2}(s)|^{p-1}v_{2}(s))])\,ds \\
&\geq 
-\Pu^{-}_{1} \left(
\int_{0}^{t} D^{2}[S_{1}(t-s) (|v_{2}(s)|^{p-1}v_{2}(s))])\,ds
\right). 
\end{aligned}
\]
Furthermore, since $S_{1}(t-s) (|v_{2}(s)|^{p-1}v_{2}(s))$ is of $C^{2}$ as a function of $x$, we have
\[
\int_{0}^{t} D^{2}[S_{1}(t-s) (|v_{2}(s)|^{p-1}v_{2}(s))]\,ds
= D^{2}\int_{0}^{t} S_{1}(t-s) (|v_{2}(s)|^{p-1}v_{2}(s))\,ds, 
\]
hence 
\begin{equation}\label{eq:intineq}
\begin{aligned}
\partial_{t} v_{1} 
+ \Pu_{1}^{-}
\left(
D^{2} 
\left[ 
S_{1}(t)u_{10} + \int_{0}^{t}S_{1}(t-s)|v_{2}|^{p-1}v_{2}(s)\,ds
\right]
\right)
&\geq 
 |v_{2}|^{p-1}v_{2}(t)
, \\
\partial_{t} v_{2} 
+ \Pu_{2}^{-}
\left(
D^{2} 
\left[ 
S_{2}(t)u_{20}+\int_{0}^{t}S_{2}(t-s)|v_{1}|^{q-1}v_{1}(s)\,ds
\right]
\right)
&\geq 
 |v_{1}|^{q-1}v_{1}(t)
\quad
\end{aligned}
\end{equation}
in $\R^{N}\times[0,T]$. 
Note that the integral preserve linear properties. 
We note that for $i=1,2$, 
$F_{i}$ satisfies 
\[
\Pu^{-}_{i}(X) \leq F_{i}(x,X), \quad x\in \R^{N}, \, X\in S^{N}. 
\]
Consequently, we then deduce from~\eqref{eq:intineq} that 
\begin{equation*}%\label{eq:vsuper}
\begin{aligned}
\partial_{t} v_{1} 
+ F_{1}
\left(
x, D^{2}v_{1}
\right)
\geq 
 |v_{2}|^{p-1}v_{2}
, \quad 
\partial_{t} v_{2} 
+ F_{2}
\left(
x, D^{2}v_{2}
\right)
\geq 
 |v_{1}|^{q-1}v_{1}
  \\
\end{aligned}
\end{equation*}
for $x\in \R^{N}$ and $t>0$. 
Namely, 
$(v_{1}, v_{2})$ is a viscosity supersolution of \eqref{eq:P} and \eqref{eq:ic}. 
Replacing $\Pu^{-}$ by $\Pu^{+}$, we can also obtain a viscosity subsolution of \eqref{eq:P} satisfying \eqref{eq:ic}.

%We next construct a subsolution of \eqref{eq:P}. 
%For $i=1$, $2$, let $\tilde{S}_{i}(t)$ denote the semigroup generated by $\Pu_{i}^{+}$. 
%Set $v_{i} = \tilde{S}_{i}(t)u_{i0}$. 
%Then it follows from $u_{i}\geq0$ that $(v_{1},v_{2})$ is a viscosity subsolution of \eqref{eq:P}. 

By the Perron method and the comparison principle, there exists a continuous viscosity solution $(u_{1},u_{2})$ of \eqref{eq:P} satisfying \eqref{eq:ic}. 
Nonnegativity of solutions follows from the comparison principle. 

Finally, we shall show that $u_i\in BUC(\R^N\times[0,T))$ for $i=1, 2$. 
We refer to~\cite[Section~3.5]{G} for the method. 
Let $(\underline{u}_1,\underline{u}_2)$ and $(\overline{u}_1,\overline{u}_2)$ be 
 viscosity subsolution and viscosity supersolution to \eqref{eq:P} and \eqref{eq:ic} obtained above. 
We can see that 
\begin{equation*}%\label{eq:buc1}
\underline{u}_i(x,t) - u_{i0}(x) 
\leq 
u_i(x,t) - u_{i0}(x) 
\leq 
\overline{u}_i(x,t) - u_{i0}(x) 
\end{equation*}
for $x,y\in\R^N$, $t>0$. 
There exists a modulus of continuity $\omega:[0,\infty)\to[0,\infty)$, 
$\omega(0)=0$, since $\underline{u}_{i}, \overline{u}_i\in BUC(\R^N\times[0,T))$. 
Then we have
\[
\underline{u}_i(x,t) - u_{i0}(x) 
\geq -\omega(t) 
\]
and 
\[
\overline{u}_i(x,t) - u_{i0}(x) 
\leq \omega(t), 
\]
which implies that 
\begin{equation}\label{eq:UCIC}
\sup_{\substack{{i=1,2}\\x\in\R^n}}|u_i(x,t)-u_{i0}(x)| \leq \omega(t). 
\end{equation}
For fixed $h>0$, set 
\[
\begin{aligned}
\overline{v}_i(x,t) &:= u_i(x,t+h) + \omega(h), \quad x\in\R^N, \ t\geq 0, \\
\underline{v}_i(x,t) &:= u_i(x,t+h) - \omega(h), \quad x\in\R^N, \ t\geq 0, 
\end{aligned}
\]
for $i=1,2$. 
Then $(\overline{v}_1,\overline{v}_2)$ is a viscosity supersolution and 
$(\underline{v}_1,\underline{v}_2)$ is a viscosity subsolution to \eqref{eq:P} and \eqref{eq:ic}. 
We see from~\eqref{eq:UCIC} that $\underline{v}_i(x,0) \leq u_{i0}(x) \leq \overline{v}_i(x,0)$. 
By Theorem~\ref{thm:CP}, we see that
\[
\underline{v}_i(x,t)
\leq
u_i(x,t) 
\leq 
\overline{v}_i(x,t) 
\quad 
x\in \R^N, \ t\in[0,T]. 
\]
Therefore, we obtain
\[
|u_i(x,t) - u_i(x,t+h)| \leq \omega(h) 
\]
for all $x\in\R^N$. 
This shows that $u_i$ is uniformly continuous with respect to variable $t$. 
Since $u_{i0}\in BUC(\R^N)$, there exists another modulus of continuity $\omega$ so that 
\[
\sup_{i=1,2} |u_{i0}(x) - u_{i0}(y)| 
\leq
\omega(|x-y|) 
\]
for $x,y\in\R^N$. 
Similarly to the above discussion, set for $h\in\R^N$, 
\[
\begin{aligned}
\overline{w}_i(x,t) := u_i(x+h,t) + \omega(|h|), \quad x\in\R^N, \ t\geq0, \\
\underline{w}_i(x,t) := u_i(x+h,t) - \omega(|h|), \quad x\in\R^N, \ t\geq0, \\
\end{aligned}
\]
Then $(\underline{w}_1,\underline{w}_2)$ is a viscosity subsolution and 
$(\overline{w}_1,\overline{w}_2)$ is a
supersolution to \eqref{eq:P} and \eqref{eq:ic}. 
Since 
$\underline{w}_i(x,0) \leq u_i(x,0)\leq \overline{w}_i(x,0)$, 
by Theorem~\ref{thm:CP}, we obtain 
\[
\sup_{i=1,2}|u_i(x,t) - u_i(x+h,t)| \leq \omega(|h|) 
\]
for all $t\in[0,T)$. 
Summarizing, $u_i$'s are uniformly continuous in $\R^N\times[0,T)$. 
\end{proof}
%

%%%%%%%%%%%%%%%%%%%%%%%%%%%%%%%%%%%%%
%%%%%%%%%%%%%%%%%%%%%%%%%%%%%%%%%%%%%
\section{Existence of global-in-time solutions (proof of Theorem~\ref{thm:main}) }\label{sec:Existence}
In this section, we shall prove the existence of global-in-time solutions to \eqref{eq:P} and \eqref{eq:ic}. 
We use the following Lemma. 

\begin{Lemma}[{\cite[Lemma~3.10]{AT}}]\label{lem:est.profile}
Let $0<\lambda \leq \Lambda$. 
Assume that $F$ satisfies \eqref{eq:ellipticity} and \eqref{eq:Fhom}. 
For each $\delta<(4\Lambda)^{-1}$, there exists $C>0$ such that 
\[
\psi(y) \leq C\exp(-\delta|y|^{2}), \quad y\in\R^{N},
\]
where $\psi$ is the profile function of a unique positive self-similar solution $\Phi$ of $\partial_{t}u +F(D^{2}u)=0$ 
 appearing as $\Phi(x,t)=t^{-\alpha(F)}\psi(x/\sqrt{t})$. 
Likewise, for each $\delta>(4\lambda)^{-1}$, there exists $C>0$ such that 
\[
C\exp(-\delta|y|^{2}) \leq \psi(y), \quad y\in \R^{N}. 
\]
\end{Lemma}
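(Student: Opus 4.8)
We only sketch the argument; the full details are in \cite{AT}. Set $\alpha:=\alpha(F)$ and
\[
\mathcal{L}w:=F(D^{2}w)-\tfrac12\,y\cdot\nabla w-\alpha w .
\]
By the self-similar form $\Phi(x,t)=t^{-\alpha}\psi(x/\sqrt t)$ and the homogeneity \eqref{eq:Fhom}, the profile $\psi$ is a positive solution of $\mathcal{L}\psi=0$ in $\R^{N}$ (this is \eqref{eq:EVP1}); by interior regularity $\psi\in C^{2}(\R^{N})$, and $\psi(y)\to0$ as $|y|\to\infty$. The plan is to trap $\psi$ between suitable multiples of the Gaussians $g_{\delta}(y):=\exp(-\delta|y|^{2})$ on an exterior domain $\{|y|>R\}$, and to absorb the bounded region $\{|y|\le R\}$ into the multiplicative constant.

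For the upper bound fix $\delta<(4\La)^{-1}$. A direct computation gives $\nabla g_{\delta}=-2\delta y\,g_{\delta}$, hence $-\tfrac12 y\cdot\nabla g_{\delta}=\delta|y|^{2}g_{\delta}$, and $D^{2}g_{\delta}=g_{\delta}\bigl(-2\delta I+4\delta^{2}\,y\otimes y\bigr)$, a symmetric matrix with eigenvalue $4\delta^{2}|y|^{2}-2\delta$ in the direction $y$ and eigenvalue $-2\delta$ on $y^{\perp}$; in particular it has exactly one positive eigenvalue as soon as $|y|^{2}>(2\delta)^{-1}$. Pulling out the positive factor $g_{\delta}$ by \eqref{eq:Fhom} and then using $F\ge\Pu^{-}$ from \eqref{eq:ellipticity}, for such $y$ one obtains
\[
\mathcal{L}g_{\delta}\ \ge\ g_{\delta}\Bigl[\,\delta\bigl(1-4\La\delta\bigr)|y|^{2}+2\la\delta(N-1)+2\La\delta-\alpha\,\Bigr].
\]
Since $1-4\La\delta>0$ there is $R>0$ with $\mathcal{L}g_{\delta}\ge g_{\delta}>0$ on $\{|y|>R\}$, and by \eqref{eq:Fhom} the same holds for $Ag_{\delta}$ for every $A>0$, so $Ag_{\delta}$ is a strict supersolution of $\mathcal{L}$ there. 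Taking $A$ so large that $Ag_{\delta}\ge\psi$ on $\{|y|=R\}$, and recalling that $\psi$ and $Ag_{\delta}$ are bounded and vanish at infinity, the comparison principle for $\mathcal{L}$ on $\{|y|>R\}$ (see below) gives $\psi\le Ag_{\delta}$ on $\{|y|>R\}$; enlarging the constant once more to dominate the bounded function $\psi$ on the compact set $\{|y|\le R\}$ yields $\psi(y)\le C\exp(-\delta|y|^{2})$ on $\R^{N}$ for a suitable $C>0$.

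The lower bound is symmetric. For $\delta>(4\la)^{-1}$ the same computation with $F\le\Pu^{+}$ gives $\mathcal{L}g_{\delta}\le g_{\delta}\bigl[\delta(1-4\la\delta)|y|^{2}+2\La\delta(N-1)+2\la\delta-\alpha\bigr]$, and now $1-4\la\delta<0$ makes $cg_{\delta}$ a strict subsolution of $\mathcal{L}$ on some $\{|y|>R\}$. Since $\psi$ is positive and continuous, $\min_{|y|=R}\psi>0$, so $cg_{\delta}\le\psi$ on $\{|y|=R\}$ for small $c>0$; the exterior comparison principle gives $cg_{\delta}\le\psi$ on $\{|y|>R\}$, and after shrinking $c$ to control $\{|y|\le R\}$ we get $\psi(y)\ge c\exp(-\delta|y|^{2})$ on $\R^{N}$.

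The delicate point — the one I expect to be the main obstacle — is the comparison principle for $\mathcal{L}$ on the exterior domain. Its zeroth-order coefficient $-\alpha$ has the wrong sign for the maximum principle, so one cannot argue naively, and no a priori Gaussian decay of $\psi$ is available to control the behaviour at infinity (that is exactly what is being proved). The resolution is that, for $R$ large, the generalized principal (Dirichlet) eigenvalue of $F(D^{2}\cdot)-\tfrac12 y\cdot\nabla\cdot$ on $\{|y|>R\}$ strictly exceeds the whole-space value $\alpha$ — removing a large ball around the origin, where the self-similar profile concentrates, raises the principal eigenvalue — so that $\mathcal{L}$ on $\{|y|>R\}$ has a positive principal eigenvalue and hence obeys a maximum principle for bounded sub/supersolutions with no growth condition imposed at infinity. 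Making this rigorous in the fully nonlinear viscosity framework is precisely the content of \cite[Lemma~3.10]{AT} (see also \cite{MQ1,MQ2}), to which we refer for the complete proof.
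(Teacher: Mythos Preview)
The paper does not give its own proof of this lemma: it is stated with the attribution \cite[Lemma~3.10]{AT} and used as a black box in the proof of Theorem~\ref{thm:main}. There is therefore no ``paper's proof'' to compare your attempt against.

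That said, your sketch is a faithful outline of the argument in \cite{AT}: Gaussian barriers $g_\delta$, the computation of $\mathcal{L}g_\delta$ via \eqref{eq:Fhom} and the Pucci bounds from \eqref{eq:ellipticity}, and a comparison on an exterior domain $\{|y|>R\}$. Your calculation of the leading coefficient $\delta(1-4\Lambda\delta)$ (resp.\ $\delta(1-4\lambda\delta)$) is correct and explains precisely why the thresholds $(4\Lambda)^{-1}$ and $(4\lambda)^{-1}$ arise. You also correctly flag the genuine difficulty, namely that the zeroth-order term $-\alpha$ has the wrong sign, so the maximum principle on $\{|y|>R\}$ is not immediate and requires the principal-eigenvalue argument carried out in \cite{AT}. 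Since the present paper defers entirely to \cite{AT} for this, your proposal is consistent with---indeed goes beyond---what the paper itself provides.
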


\begin{proof}[Proof of Theorem~1.2]
For $i=1,2$, let $\psi_{i}$ be a positive solution of the eigenvalue problem 
\begin{equation}\label{eq:EP}
F_{i}(D^{2}\psi_{i}) -\frac{1}{2}y\cdot D\psi_{i} = \mu \psi_{i}, \quad y\in \R^{N}, \quad
\lim_{|y|\to\infty}\psi(y)=0. 
\end{equation}
Let $(\alpha(F_{i}),\psi_{i})$ be the eigenpair of~\eqref{eq:EP}. 
Set $\alpha_i = \alpha(F_i)$. 
The existence of solution to \eqref{eq:EP} is obtained in \cite[Section~3]{AT}. 
See also \cite{MQ1}. 
Let us look forward a supersolution to \eqref{eq:P} of the form 
\begin{equation*}
\overline{u_{1}}(x,t):= \e(t+1)^{a}\phi_{1}(x,t+1), \quad
\overline{u_{2}}(x,t):=\tilde{\e}(t+1)^{b}\phi_{2}(x,t+1), 
\end{equation*}
where $\phi_{i}$ is defined by 
\[
\phi_{i}(x,t):= t^{-\alpha_{i}}\psi_{i}(t^{-\frac{1}{2}}x), \quad x\in \R^{N}, \, t>0 
\]
and $a,b$ will be defined. 
We refer to~\cite[Section~32]{QS1} for the case of linear diffusion. 
For each $i=1,2$, $\phi_{i}$ satisfies 
%\[
%\phi_{i}(x,t) = \sigma^{\alpha(F_{i})} \phi_{i}(\sigma^{1/2}x, \sigma t)
%\]
%for any $\sigma>0$, and 
\begin{equation}\label{eq:phii}
\partial_{t} \phi_{i} + F_{i}(D^{2}\phi_{i})=0 \quad 
\mathrm{ in} \  \R^{N}\times(0,\infty)
\end{equation}
 in the sense of viscosity solution. 
In fact, by the argument used in \cite[Lemma~3.1]{MQ1},  
we can see that $\phi_i$ satisfies \eqref{eq:phii}. 
%For the sake of notation, let $\alpha_{i}$ denote $\alpha(F_{i})$. 

In what follows, we shall find a sufficient condition that $(\overline{u}_1,\overline{u}_2)$ becomes a viscosity supersolution of~\eqref{eq:P}. 
We have 
\begin{equation}\label{eq:u1t}
\begin{aligned}
\partial_{t} \overline{u_{1}} 
&= a \e(t+1)^{a-1}\phi_{1}
+ \e (t+1)^{a} \partial_{t}\phi_{1}(x,t+1), \quad x\in \R^N, \ t\geq0. 
\end{aligned}
\end{equation}
%Moreover, by definition of $\phi_{1}$, we have 
%\begin{equation}
%\begin{aligned}
%\partial_{t} \phi_{1}(x,t+1) 
%&= -\alpha_{1}(t+1)^{-\alpha_{1}-1} \psi_{1} ((t+1)^{-\frac{1}{2}}x)\\
%& - (t+1)^{-\alpha_{1}-\frac{3}{2}} 
%x\cdot D\psi_{1}((t+1)^{-\frac{1}{2}}x). 
%\end{aligned}
%\end{equation}
%We claim that 
%\begin{equation}\label{eq:u1D2}
%\begin{aligned}
%F_{1}(D^{2} \overline{u_{1}})&= \e (t+1)^{a} F_{1}(D^{2}\phi_{1}), \quad t\geq0  
%\end{aligned}
%\end{equation}
%in the sense of viscosity solutions. 
%Let $\Phi\in C^{2,1}(\R^N\times[0,\infty))$. 
Assume that $\overline{u}_1 -\Phi$ attains its minimum at $(x,t)$ and satisfies that 
\[
(\overline{u}_1-\Phi)(x,t) = 0. 
\]
Note that the function 
\[
\phi_1(\cdot,\cdot+1) - \frac{1}{\e(t+1)^a}\Phi
\]
also attains minimum at $(x,t)$. 
Since $\phi_1(\cdot,\cdot+1)\in C^{1,1}(\R^N\times(0,\infty))$ and 
$\phi_1$ is a viscosity solution of~\eqref{eq:phii}, it holds that 
\begin{equation*}%\label{eq:u1vis}
\begin{aligned}
\partial_t \Phi(x,t) 
&= a\e(t+1)^{a-1}\phi_1 + \e(t+1)^a\partial_t\phi_1\\
&\geq a\e(t+1)^{a-1}\phi_1 - F_1\left(\frac{1}{\e(t+1)^a}D^2\Phi\right) \\
& = \e(t+1)^{a-1}\phi_1 - F_1(D^2\Phi). 
\end{aligned}
\end{equation*}
On the other hands, we have 
\begin{equation}\label{eq:u1p}
\begin{aligned}
\overline{u_{2}}^{p} &= \tilde{\e}^{p} (t+1)^{bp} \phi_{2}^{p} , \quad t\geq0. 
\end{aligned}
\end{equation}
Combining~\eqref{eq:u1t}--\eqref{eq:u1p}, we see that 
\begin{equation}\label{eq:u1ineq}
\partial_{t}\Phi + F_{1}(D^{2}\Phi) - \overline{u_{2}}^{p}
\geq  (t+1)^{bp-\alpha_{2}p}
[\e a(t+1)^{a-1-\alpha_{1}-bp + \alpha_{2}p} \psi_{1} -\tilde{\e}^{p}\psi_{2}^{p}] 
\end{equation}
at $(x,t)$. 
In the same way, it holds that 
 \begin{equation}\label{eq:u2ineq}
\partial_{t}\Phi + F_{2}(D^{2}\Phi) - \overline{u_{1}}^{q}
\geq  (t+1)^{aq-\alpha_{1}q}
[\e b(t+1)^{b-1-\alpha_{1}-aq + \alpha_{1}q} \psi_{2} -\tilde{\e}^{q}\psi_{1}^{q}] 
\end{equation}
at $(x,t)$. 

%Since $\partial_{t}\phi_{1} + F_{1}(D^{2}\phi_{1})=0$, 
%combine these estimates to get 
%\begin{equation}
%\begin{aligned}
%\partial_{t}\overline{u_{1}} + F_{1}(D^{2}\overline{u_{1}}) - \overline{u_{2}}^{p}
%&= a\e(t+1)^{a-1}\phi_{1} - \tilde{\e}^{p}(t+1)^{bp}\phi_{2}^{p} 
%\\
%&= a\e(t+1)^{a-1-\alpha_{1}} \psi_{1}
%-\tilde{\e}^{p}(t+1)^{bp-\alpha_{2}p}\psi_{2}^{p} \\
%&=  (t+1)^{bp-\alpha_{2}p}
%[\e a(t+1)^{a-1-\alpha_{1}-bp + \alpha_{2}p} \psi_{1} -\tilde{\e}^{p}\psi_{2}^{p}]
%\end{aligned}
%\end{equation}
%for $t\geq0$. 

To ensure that the right-hand sides of~\eqref{eq:u1ineq},~\eqref{eq:u2ineq} become nonnegative, it suffices that 
\[
\begin{aligned}
&a-1-\alpha_{1} - bp + \alpha_{2}p\geq0, \quad 
\e a \psi_{1} \geq \tilde{\e}^{p}\psi_{2}^{p},\\
&b-1-\alpha_{2} -aq + \alpha_{1}q \geq0, \quad 
\tilde{\e}b\psi_{2}\geq \e^{q}\psi_{1}^{q}. 
\end{aligned}
\]
Solving
\[
a-1-\alpha_{1} - bp + \alpha_{2}p=0, \quad 
b-1-\alpha_{2} -aq + \alpha_{1}q =0, 
\]
we find the conditions
\begin{equation}\label{eq:ab}
a=\alpha_{1} - \frac{p+1}{pq-1}, \quad b= \alpha_{2} - \frac{q+1}{pq-1}. 
\end{equation}
Under the conditions~\eqref{eq:ab}, 
$a>0$ and $b>0$ are equivalent to~\eqref{eq:pqineq}, that is, 
\[
\alpha_{1}>\frac{p+1}{pq-1}, \quad 
\alpha_{2}>\frac{q+1}{pq-1}. 
\]
If $\e=\tilde{\e}$, then we can find $\e>0$ so small that 
$a \geq \e^{p-1}(\psi_{2}^{p}/\psi_{1})$ and 
$b \geq \e^{q-1}(\psi_{1}^{q}/\psi_{2})$. 
Note that $\psi_{2}^{p}/\psi_{1}$ and $\psi_{1}^{q}/\psi_{2}$ are bounded. 
Indeed, applying Lemma~\ref{lem:est.profile} for $F_{i}$, 
for each $a_{i}<(4\Lambda_{i})^{-1}$, there exists $C_{i}^{+}>0$ such that 
\[
\psi_{i}(y) \leq C_{i}^{+}\exp(-a_{i}|y|^{2}), \quad y\in \R^{N},\quad i=1,2. 
\]
We also see that for each $b_{i}>(4\lambda)^{-1}$, there exists $C_{i}^{-}>0$ such that 
\[
C_{i}^{-} \exp(-b_{i}|y|^{2}) \leq \psi_{i}(y), \quad y\in \R^{N}, 
\]
hence
\[
C_{1} \frac{\exp(-b_{2}|y|^{2})^{p}}{\exp(-a_{1}|y|^{2})}
\leq
\frac{\psi_{2}^{p}}{\psi_{1}} 
\leq C_{2}\frac{\exp(- a_{2}|y|^{2})^{p}}{\exp(-b_{1}|y|^{2})}, 
\]
where $C_{1}= (C_{2}^{-})^{p}/C_{1}^{+}$ 
and $C_{2}= (C_{2}^{+})^{p}/C_{1}^{-}$. 
We also have a similar estimate for $\psi_{1}^{q}/\psi_{2}$. 
Finally, it follows from \eqref{eq:pqellip} that $\psi_{1}^{q}/\psi_{2}$ and $\psi_{1}^{q}/\psi_{2}$ are bounded. 
Therefore, assuming \eqref{eq:pqineq} and choosing 
$u_{i0}(x):=\bar{u}_{i}(x,0)$, 
by the Perron method, there exists a global-in-time solution $(u_{1},u_{2})$ to \eqref{eq:P} and \eqref{eq:ic}. 
\end{proof}
%

%%%%%%%%%%%%%%%%%%%%%%%%%%%%%%%%%%%%%
%%%%%%%%%%%%%%%%%%%%%%%%%%%%%%%%%%%%%
\section{Proof of Theorem~\ref{thm:CP}}\label{sec:CP}
%%%%%%%%%%%%%%%%%%%%%%%%%%%%%%%%%%%%%
%%%%%%%%%%%%%%%%%%%%%%%%%%%%%%%%%%%%%
%

In this section, we prove Theorem~\ref{thm:CP}. 
\begin{Lemma}[{\cite[Proposition~3.8 (2)]{Ko}}]\label{lem:structure}
Asssume that $F$ satisfy \eqref{eq:ellipticity} and \eqref{eq:FLip}. 
There exists a modulus of continuity $\omega_{F}:[0,\infty)\to\R$ 
such that, if 
$X$, $Y\in S^N$, $\mu>1$ satisfy 
\begin{equation*}%\label{eq:mat.ineq0}
\begin{aligned}
-3\mu
\begin{pmatrix}
I & O\\
O & I
\end{pmatrix}
\leq 
\begin{pmatrix}
X & O\\
O & -Y
\end{pmatrix}
\leq 
3\mu
\begin{pmatrix}
I & -I\\
-I & I
\end{pmatrix},  
\end{aligned}
\end{equation*}
then 
\[
F (y,Y) - F(x,X)
\leq 
\omega_{F} 
\left(
|x-y| + \mu|x-y|^2
 \right) 
\]
for all $x,y\in\R^N$. 
\end{Lemma}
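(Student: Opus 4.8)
The plan is to set the two structural hypotheses on $F$ against the matrix inequality: the degenerate ellipticity \eqref{eq:ellipticity} will absorb the dependence on the Hessian slot, the Lipschitz bound \eqref{eq:FLip} will handle the spatial slot, and in each case the matrix inequality is what turns the raw bound into one controlled by $|x-y|+\mu|x-y|^{2}$. First I would record the elementary consequences of the hypothesis: testing the right-hand inequality on vectors $(\xi,\xi)$ gives $X\le Y$; testing on $(\xi,0)$ and $(0,\eta)$ together with the left-hand inequality gives $-3\mu I\le X\le 3\mu I$ and $-3\mu I\le Y\le 3\mu I$, hence $\|X\|,\|Y\|\le 3\mu$. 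Then I would split
\[
F(y,Y)-F(x,X)=\bigl(F(y,Y)-F(y,X)\bigr)+\bigl(F(y,X)-F(x,X)\bigr)
\]
and estimate the two brackets separately.

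For the first bracket, \eqref{eq:ellipticity} gives $F(y,Y)-F(y,X)\le\Pu^{+}_{\la,\La}(Y-X)$; since $Y-X\ge 0$, every competitor $A$ with $\la I\le A\le\La I$ has $\tr[-A(Y-X)]\le 0$, so in fact $\Pu^{+}_{\la,\La}(Y-X)=-\la\,\tr(Y-X)\le 0$ and this bracket can be dropped. (In particular, when $F$ does not depend on $x$ — the case relevant to Theorem~\ref{thm:main} — the second bracket is absent and one is done with $\omega_F\equiv 0$.)

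The second bracket is the delicate one. Reading \eqref{eq:FLip} off directly together with $\|X\|\le 3\mu$ only gives $F(y,X)-F(x,X)\le L(\|X\|+1)|x-y|\le L(3\mu+1)|x-y|$, of order $\mu|x-y|$, which is \emph{not} dominated by any modulus of $|x-y|+\mu|x-y|^{2}$; so the matrix inequality has to be used a second time, now with test directions adapted to the ellipticity of $F$. This is transparent for the divergence-type model $F(x,X)=-\tr(A(x)X)$: writing $A(x)=\sigma(x)\sigma(x)^{\top}$ with $\sigma$ Lipschitz (constant $L_\sigma$) — legitimate since $A$ is Lipschitz and $A\ge\la I$ — one obtains
\[
\begin{aligned}
F(y,Y)-F(x,X)&=\sum_{k=1}^{N}\bigl(\langle X\sigma(x)e_k,\sigma(x)e_k\rangle-\langle Y\sigma(y)e_k,\sigma(y)e_k\rangle\bigr)\\
&\le 3\mu\sum_{k=1}^{N}|\sigma(x)e_k-\sigma(y)e_k|^{2}\le 3\mu L_\sigma^{2}|x-y|^{2},
\end{aligned}
\]
exactly the desired order $\mu|x-y|^{2}$. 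For a general uniformly elliptic $F$ satisfying \eqref{eq:ellipticity} one must upgrade this; the bare inf--sup representation $F(x,X)=\inf_{Z}\bigl(F(x,Z)+\Pu^{+}_{\la,\La}(X-Z)\bigr)$ does not by itself suffice, because the spatial variation of the zeroth-order piece $F(x,Z)$ puts the factor $\mu$ back in, and one needs the finer analysis of \cite[Proposition~3.8~(2)]{Ko}, which I would invoke rather than reprove. Its outcome is a bound $F(y,Y)-F(x,X)\le C\bigl(\mu|x-y|^{2}+|x-y|\bigr)$ with $C=C(\la,\La,L,N)$, the last term also covering the ``$+1$'' in \eqref{eq:FLip} through $|F(x,0)-F(y,0)|\le L|x-y|$.

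Finally, putting $\omega_F(r):=Cr$ — which is a genuine modulus of continuity — yields $F(y,Y)-F(x,X)\le\omega_F(|x-y|+\mu|x-y|^{2})$ for all $x,y$, as required. The main obstacle is the step just indicated: obtaining the spatial error at the quadratic order $\mu|x-y|^{2}$ rather than the naive linear order $\mu|x-y|$, which is what keeps the final modulus independent of $\mu$. It calls for a second, carefully chosen application of the matrix inequality, with test vectors built from the (square root of the) ellipticity data, and, for $F$ outside the explicit divergence-type class, a real structural argument; the ellipticity reduction of the first bracket and the assembly of $\omega_F$ are, by comparison, routine.
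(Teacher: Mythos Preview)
The paper does not supply a proof of Lemma~\ref{lem:structure}; it is quoted from \cite[Proposition~3.8~(2)]{Ko} and used as a black box in Section~\ref{sec:CP}. There is therefore no proof in the paper to compare your argument against.

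On its own merits, your proposal is circular at the decisive step. You correctly split $F(y,Y)-F(x,X)$, correctly dispose of the Hessian bracket via \eqref{eq:ellipticity} and $X\le Y$, and correctly diagnose the obstacle: applying \eqref{eq:FLip} together with $\|X\|\le 3\mu$ gives only a bound of order $\mu|x-y|$, which no modulus of $|x-y|+\mu|x-y|^{2}$ can absorb. Your divergence-form computation is also right and shows exactly how a second use of the full matrix inequality, with test vectors $(\sigma(x)e_k,\sigma(y)e_k)$, recovers the quadratic order $\mu|x-y|^{2}$. But for general $F$ you then write that ``one needs the finer analysis of \cite[Proposition~3.8~(2)]{Ko}, which I would invoke rather than reprove'' --- and that proposition \emph{is} Lemma~\ref{lem:structure}. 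You are citing the statement you set out to prove. To close the gap you would have to actually carry out that analysis for $F$ satisfying only \eqref{eq:ellipticity} and \eqref{eq:FLip}, or else isolate the additional structural hypothesis on $F$ under which the square-root trick (or an analogue) goes through; the proposal as written does neither.
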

\begin{Lemma}\label{lem:CV}
Let $(u_{1},u_{2})\in USC\cap L^{\infty}(\R^{N}\times [0,T))^{2}$ (resp., $LSC\cap L^{\infty}(\R^{N}\times [0,T))^{2}$) be a viscosity subsolution (resp., supersolution) of \eqref{eq:P}. 
We set for $i=1,2$, 
\begin{align*}
w_{i}:=e^{-\nu t}u_{i}, 
\end{align*}
where $\lambda>0$ is a constant. 
Then, $(w_{1},w_{2})$ is a viscosity subsolution (resp., supersolution) of 
\begin{equation}\label{eq:lem4.1}
\left\{
\begin{aligned}
\partial_t w_{1} +F_{1}(x,D^2 w_{1}) + \nu w_{1} 
- e^{(p-1)\nu t} |w_{2}|^{p-1}w_{2}=0, \quad x\in \R^N, \ t>0, \\
\partial_t w_{2} +F_{2}(x,D^2 w_{2})+\nu w_{2} 
- e^{(q-1)\nu t} |w_{1}|^{q-1}w_{1}=0, \quad x\in \R^N, \ t>0.
\end{aligned}
\right.
\end{equation}
\end{Lemma}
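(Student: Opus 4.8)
The plan is to verify directly from the definition of viscosity sub/supersolution (Definition~2.3) that the exponential rescaling $w_i := e^{-\nu t} u_i$ transforms a sub/supersolution of~\eqref{eq:P} into one of~\eqref{eq:lem4.1}. I would treat only the subsolution case in detail; the supersolution case follows verbatim with the inequalities reversed and lower semicontinuous envelopes in place of upper ones. The key observation is that $w_i$ and $u_i$ differ only by the smooth, strictly positive factor $e^{-\nu t}$, so testing $w_i$ against a test function $\phi$ is equivalent to testing $u_i$ against the test function $e^{\nu t}\phi$, and the chain rule converts time and space derivatives in a transparent way.

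The steps, in order: First, fix $i\in\{1,2\}$, a point $(x_0,t_0)\in\R^N\times(0,T)$, and $\phi\in C^{2,1}$ such that $w_i^* - \phi$ has a local maximum at $(x_0,t_0)$. Since $e^{-\nu t}$ is continuous and positive, $w_i^* = e^{-\nu t} u_i^*$, so $e^{-\nu t}(u_i^* - e^{\nu t}\phi)$ has a local maximum at $(x_0,t_0)$; multiplying by the positive number $e^{\nu t_0}$ and absorbing the smooth positive factor shows $u_i^* - \psi$ has a local maximum at $(x_0,t_0)$, where $\psi(x,t) := e^{\nu t}\phi(x,t)\in C^{2,1}$. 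Second, apply the definition of subsolution for~\eqref{eq:P} with test function $\psi$: at $(x_0,t_0)$,
\[
\partial_t\psi + F_i(x_0, D^2\psi) - |u_{j}^*|^{p_i - 1} u_{j}^* \le 0,
\]
where $(j,p_i)$ is $(2,p)$ for $i=1$ and $(1,q)$ for $i=2$. Third, compute $\partial_t\psi = \nu e^{\nu t}\phi + e^{\nu t}\partial_t\phi$ and $D^2\psi = e^{\nu t} D^2\phi$, and use homogeneity of order one~\eqref{eq:Fhom} (note $e^{\nu t_0}>0$) to get $F_i(x_0, D^2\psi) = e^{\nu t_0} F_i(x_0, D^2\phi)$. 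Fourth, substitute $u_j^* = e^{\nu t} w_j^*$ into the reaction term: $|u_j^*|^{p_i-1}u_j^* = e^{p_i \nu t}|w_j^*|^{p_i-1}w_j^*$. Collecting everything and dividing through by the positive factor $e^{\nu t_0}$ yields exactly
\[
\partial_t\phi + F_i(x_0, D^2\phi) + \nu\phi - e^{(p_i-1)\nu t_0}|w_j^*|^{p_i-1}w_j^* \le 0
\]
at $(x_0,t_0)$, which is the subsolution inequality for~\eqref{eq:lem4.1}.

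Honestly, there is no real obstacle here: the lemma is a routine bookkeeping computation, and the only points requiring the slightest care are (a) making sure the semicontinuous envelope behaves well under multiplication by the continuous positive factor $e^{-\nu t}$ — which it does, since $(e^{-\nu t}u_i)^* = e^{-\nu t}u_i^*$ — and (b) invoking~\eqref{eq:Fhom} with the scalar $e^{\nu t_0}\ge 0$ rather than trying to pass the factor through $F_i$ by hand. I would also remark in passing that the constant is called $\nu$ in the statement of~\eqref{eq:lem4.1} while the displayed definition writes "$\lambda>0$"; I would standardize to $\nu$ throughout the proof. The supersolution case is obtained by replacing $u_i^*$ with $(u_i)_*$, "maximum" with "minimum", and reversing each inequality, with no change in the algebra.
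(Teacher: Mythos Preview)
Your proposal is correct and follows essentially the same route as the paper: pass from a test function $\phi$ for $w_i$ to the test function $\psi=e^{\nu t}\phi$ for $u_i$, apply the subsolution inequality for~\eqref{eq:P}, expand $\partial_t\psi$ and use the homogeneity~\eqref{eq:Fhom} on $F_i(x,e^{\nu t}D^2\phi)$, then divide by $e^{\nu t_0}$. The only point to tighten is the transfer of the local maximum: the implication ``$e^{-\nu t}(u_i^*-\psi)$ has a local max at $(x_0,t_0)$ $\Rightarrow$ $u_i^*-\psi$ has a local max there'' is not automatic for a nonconstant positive factor, but becomes immediate once you normalize so that $(w_i^*-\phi)(x_0,t_0)=0$ (exactly as the paper does), which also makes your $\nu\phi(x_0,t_0)$ agree with the required $\nu w_i^*(x_0,t_0)$.
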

\begin{proof}
We shall argue only $w_1$. 
Let $\vphi \in C^2(\R^N\times[0,T))$ be such that 
$w_1 -\vphi$ achieve a maximum at $(x_0,t_0)$ and 
\begin{equation*}%\label{eq:mx1}
(w_1-\vphi)(x_0,t_0)=0. 
\end{equation*}
Then for all $(x,t)\in \R^N\times[0,T)$, 
\[
e^{-\nu t}u_1(x,t)-\vphi(x,t) 
= (w_1 - \vphi)(x,t) \leq (w_1 -\vphi)(x_0,t_0) 
=0. 
\]
We have $u_1(x,t) - e^{-\nu t}\vphi(x,t) \leq0$ for all $(x,t)\in \R^N\times[0,T)$. 
On the other hand, $u_1(x_0,t_0) - e^{\nu t_0}\vphi(x_0,t_0)=0$. 
Thus, $u_1 - e^{\nu t }\vphi$ attains a maximum at $(x_0,t_0)$. 
Since $(u_1,u_2)$ is a viscosity subsolution of \eqref{eq:P}, 
we have, at $(x,t)=(x_0, t_0)$, 
\[
\begin{aligned}
0
&\geq 
\nu e^{\nu t}\vphi + e^{\nu t} \partial_t \vphi + F_1(x,e^{\nu t}D^2\vphi) - |u_2|^{p-1}u_2 \\
&=
\nu e^{\nu t}\vphi + e^{\nu t} \partial_t \vphi 
+ e^{\nu t} F_1(x,D^2\vphi) - e^{p\nu t}|w_2|^{p-1}w_2. \\
\end{aligned}
\]
We here used \eqref{eq:Fhom}. 
Therefore, we obtain 
\[
0\geq \nu w_1 + \partial_t\vphi 
+ F_1(x,D^2\vphi) - e^{(p-1)\nu t} |w_2|^{p-1}w_2. 
\]
By the same argument, we also obtain 
\[
0\geq \nu w_2 + \partial_t\vphi 
+ F_2(x,D^2\vphi) - e^{(q-1)\nu t} |w_1|^{q-1}w_1. 
\]
Consequently, $(w_1,w_2)$ is a viscosity solution of \eqref{eq:lem4.1}. 
\end{proof}
Theorem~\ref{thm:CP} is shown by proving the following proposition: 
\begin{Prop}
Let $p,q\geq 1$ and $T>0$. 
Let $(u_{1},u_{2})\in USC\cap L^{\infty}(\R^{N}\times [0,T))^{2}$ be a viscosity subsolution and $(v_{1},v_{2})\in LSC\cap L^{\infty}(\R^{N}\times [0,T))^{2}$ be a viscosity supersolution  of \eqref{eq:lem4.1}, respectively. 
Assume that there exists a constant $R>0$ such that 
for all $\lambda>0$, 
\begin{equation}\label{eq:prop4.2}
|e^{\nu t}u_i|, |e^{\nu t}v_i| \leq R \quad \mathrm{in} \ \R^N\times[0,T), i=1,2.
\end{equation}
If $u_{i0} \leq v_{i0}$ in $\R^N$ for $i=1,2$, then 
$u_i \leq v_i$ in $\R^N\times[0,T)$ for $i=1,2$. 
\end{Prop}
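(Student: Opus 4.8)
The plan is to prove this comparison result by the standard doubling-of-variables technique adapted to weakly coupled systems, exploiting the favourable sign structure that comes from the exponents $p,q\geq 1$. First I would reduce to a bounded domain in time: fix $\tau\in(0,T)$ and argue on $\R^N\times[0,\tau]$; by the hypothesis \eqref{eq:prop4.2} all four functions are globally bounded there (uniformly in the parameter), which is what makes the whole argument work on all of $\R^N$. I would then argue by contradiction, supposing that $\sup_{i}\sup_{\R^N\times[0,\tau]}(u_i-v_i)=:2\sigma>0$ for one of the indices, say $i=1$, and set up the penalized functional
\[
\Phi_i(x,y,t):=u_i(x,t)-v_i(y,t)-\frac{|x-y|^2}{2\eta}-\frac{\beta}{T-t}-\gamma(\langle x\rangle+\langle y\rangle),
\]
where $\langle x\rangle:=(1+|x|^2)^{1/2}$, and where $\eta>0$ is small, $\beta,\gamma>0$ are small auxiliary parameters. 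The $\gamma$-term forces the supremum of $\max_i\Phi_i$ to be attained at an interior point $(\hat x,\hat y,\hat t)$ (for $\gamma,\beta$ small enough the supremum is still close to $2\sigma>0$, so in particular $\hat t>0$), and standard estimates give $|\hat x-\hat y|^2/\eta\to 0$ and $\gamma|\hat x|\to 0$ as $\eta,\gamma\to 0$.

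Next I would apply the parabolic theorem on sums (the Crandall--Ishii lemma) at the maximum point of the index $i^\ast$ that realizes the maximum of $\max_i\Phi_i$: this yields $a\in\R$ and matrices $X,Y\in S^N$ with $(a+\beta/(T-\hat t)^2 + \text{($\gamma$-correction)}, \,\frac{\hat x-\hat y}{\eta}+\gamma\nabla\langle\hat x\rangle,\,X)\in\overline{PJ}^{2,+}u_{i^\ast}(\hat x,\hat t)$ and a matching element of $\overline{PJ}^{2,-}v_{i^\ast}(\hat y,\hat t)$, together with the matrix inequality in the form needed for Lemma~\ref{lem:structure} (with $\mu$ of order $1/\eta$). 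Writing the sub/supersolution inequalities for \eqref{eq:lem4.1} for the component $i^\ast$ and subtracting, the time-derivative terms combine to give a strictly positive contribution $\beta/(T-\hat t)^2>0$, the second-order terms are controlled by $\omega_{F_{i^\ast}}(|\hat x-\hat y|+\mu|\hat x-\hat y|^2)\to 0$ via Lemma~\ref{lem:structure} (after absorbing the smooth $\gamma$-perturbation, which contributes only $O(\gamma)$ since $F_{i^\ast}$ is uniformly elliptic), the $\nu w_{i^\ast}$ terms give $\nu(e^{\nu\hat t}u_{i^\ast}(\hat x,\hat t)-e^{\nu\hat t}v_{i^\ast}(\hat y,\hat t))$, and the coupling terms give the difference of the zero-order reaction terms.

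The key point — and where the sign structure and the choice of the exponential change of variables pay off — is handling the coupling term. Say $i^\ast=1$; the reaction difference is $e^{(p-1)\nu\hat t}(|w_2(\hat x,\hat t)|^{p-1}w_2(\hat x,\hat t)-|w_2(\hat y,\hat t)|^{p-1}w_2(\hat y,\hat t))$, where here I abuse notation writing $w_i=e^{-\nu t}u_i$ resp.\ $e^{-\nu t}v_i$. Since $s\mapsto|s|^{p-1}s$ is nondecreasing and locally Lipschitz with constant $\le p R^{p-1}$ on $[-R,R]$ by the a priori bound, this term is bounded by $pR^{p-1}e^{(p-1)\nu T}\,(w_2(\hat x,\hat t)-v_2/e^{\nu\hat t})^+ \le pR^{p-1}e^{(p-1)\nu T}\sup_{\R^N\times[0,\tau]}(w_2-\tilde w_2)^+$ where $\tilde w_2$ is the supersolution component. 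Choosing $\nu$ (the free constant in Lemma~\ref{lem:CV}) large — larger than $p R^{p-1}e^{(p-1)\nu T}$ and $qR^{q-1}e^{(q-1)\nu T}$, which is possible because $R$ and $T$ are fixed before $\nu$ is chosen (note $\nu$ enters \eqref{eq:prop4.2} but the bound $R$ is uniform in $\nu$) — makes the dissipative term $\nu(\cdots)$ strictly dominate the coupling term, and one reaches the contradiction $0<\beta/(T-\hat t)^2\le \omega_{F_1}(o(1))+o(1)+\big(pR^{p-1}e^{(p-1)\nu T}-\nu\big)\sigma+o(1)<0$ after sending first $\eta,\gamma\to 0$ and then $\beta\to 0$, then $\tau\uparrow T$. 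The main obstacle is the bookkeeping needed to make the coupled max-over-$i$ argument rigorous: one must verify that at the maximizing point the \emph{same} index $i^\ast$ controls both the loss term and is the one for which the jet/matrix inequalities are applied, and that the cross-reaction term is genuinely estimated by $\sup(w_j-\tilde w_j)^+$ over the \emph{other} index $j$, which is in turn $\le \sup_i\sup(w_i-\tilde w_i)^+=:2\sigma$; choosing $\nu$ after $R$ but ensuring the implicit inequality $\nu> pR^{p-1}e^{(p-1)\nu T}$ is solvable requires that $T$ be chosen small first, or equivalently one runs the whole comparison on a short initial interval and then iterates — I would handle this by first establishing the result on $[0,T_0]$ for $T_0$ small depending on $R$, then stepping forward, since $R$ does not grow.
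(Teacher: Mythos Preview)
Your overall strategy coincides with the paper's: penalize with $\mu/(T-t)$ and a spatial weight, double the variables, apply the parabolic Crandall--Ishii lemma at the maximizing index, control the second-order terms via Lemma~\ref{lem:structure}, and choose $\nu$ large so that the dissipative term $\nu(u_{i^\ast}-v_{i^\ast})$ dominates the cross-coupling. The only substantive divergence is in your estimate of the coupling term, and there you miss the point of the hypothesis~\eqref{eq:prop4.2}. You bound the Lipschitz constant of $s\mapsto|s|^{p-1}s$ on $[-R,R]$ by $pR^{p-1}$ and then multiply by the explicit factor $e^{(p-1)\nu t}\le e^{(p-1)\nu T}$, leaving you with the implicit inequality $\nu>pR^{p-1}e^{(p-1)\nu T}$ and forcing the short-time-plus-iteration detour at the end. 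The paper instead reads~\eqref{eq:prop4.2} as $|u_j|,|v_j|\le Re^{-\nu t}$, so the Lipschitz constant at the relevant scale is $p(Re^{-\nu t})^{p-1}=pR^{p-1}e^{-(p-1)\nu t}$, which \emph{exactly cancels} the prefactor $e^{(p-1)\nu t}$ in~\eqref{eq:lem4.1}. One is then left with the trivially solvable condition $\nu>\max\{pR^{p-1},qR^{q-1}\}$ and no iteration is needed; this cancellation is precisely why the exponential change of variables in Lemma~\ref{lem:CV} was introduced. Your iteration patch is correct, but it works around a difficulty that is not actually there. (Minor cosmetic differences --- the paper also doubles the time variable and uses $\delta|x|^2$ rather than $\gamma\langle x\rangle$ --- are immaterial.)
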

\begin{proof}
For $\mu$, $\delta>0$, define 
\[
\theta_{\mu,\delta}
:=\sup_{i,x,t}\{u_i(x,t) -v_i(x,t) - \frac{\mu}{T-t} -\delta|x|^2\}. 
\]
By the assumption~\eqref{eq:prop4.2}, we see that 
$\theta_{\mu,\delta}\leq 2e^{-\nu t}R\leq 2R$. 
Put $\displaystyle \theta:=\limsup_{\mu,\delta\to0}\theta_{\mu,\delta}$. 

If $\theta\leq0$, for all $x$, $t$, $i$, we have 
\[
u_i(x,t) - v_i (x,t) \leq \frac{\mu}{T-t} + \delta|x|^2 + \theta_{\mu,\delta}. 
\]
Taking $\limsup$, we then have $u_i(x,t) - v_i(x,t)\leq0$. 

To obtain a contradiction, suppose that $\theta>0$. 
There exists a subsequence (expressed by the same symbol) such that 
$\theta_{\mu,\delta}\geq \theta/2>0$. 
In what follows fix $\mu$ and $\delta$ sufficiently small. 
Consider the doubling of the variables 
\[
(i,x,y,t,s)\mapsto 
u_i(x,t)-v_i(y,s) -\frac{\mu}{T-t} -\delta|x|^2 
-\frac{|x-y|^2}{2\e} - \frac{|t-s|^2}{2\e}, 
\]
where $\e\in(0,1)$ is a parameter. 
Assume that the doubling map attains a maximum at 
$(i_\e,x_\e,y_\e,t_\e,s_\e)$. 
We may assume $(i_\e,x_\e,y_\e,t_\e,s_\e)
\in\{1,2\}\times\overline{B_{r_\delta}}^2\times[0,T-\tau_\mu]^2$ for some $r_\delta>0$ and $\tau_\mu>0$, where $B_r$ stand for the ball centered at the origin with radius $r>0$.

It follows from 
\[
\theta_{\mu,\delta} \leq 
u_i(x,t)-v_i(y,s) -\frac{\mu}{T-t} -\delta|x|^2 
-\frac{|x-y|^2}{2\e} - \frac{|t-s|^2}{2\e} 
\]
at $(i_\e,x_\e,y_\e,t_\e,s_\e)$ 
that 
\begin{equation}\label{eq:theta}
\begin{aligned}
\frac{|x_\e - y_\e|^2}{2\e} + \frac{|t_\e - s_\e|^2}{2\e} 
+\frac{\mu}{T- t_\e} +\delta|x_\e|^2 
&\leq
u_{i_\e}(x_\e,t_\e)-v_{i_\e}(y_\e,s_\e) -\theta_{\mu,\delta} \\
&\leq 4R. 
\end{aligned}
\end{equation}
Taking a subsequence of $i_\e$ such that $i_\e \equiv \hat{i}\in\{1,2\}$ for 
sufficiently $\e\ll 1$. 
Take a subsequence if necessary, we find 
$(\hat{x},\hat{t},\hat{i}) 
\in \overline{B_{r_\delta}}\times[0,T-\tau_\mu)\times \{1,2\}$ 
such that 
\[
x_\e, y_\e \to \hat{x}, \quad t_\e, s_\e\to \hat{t}, \quad i_\e \to\hat{i}
\]
as $\e\to0$. 
It follows from~\eqref{eq:theta} that 
\[
\begin{aligned}
&\limsup_{\e\to 0}
\left(
 \frac{|x_\e -y_\e|^2}{2\e} +  \frac{|t_\e - s_\e|^2}{2\e} 
 \right)\\
& \leq 
\limsup_{\e\to 0}
\left(
u_{\hat{i}}(x_\e,t_\e)-v_{\hat{i}}(y_\e,_\e) 
-\frac{\mu}{T-t_\e} -\delta|x_\e|^2
\right)
 -\theta_{\mu,\delta}\\
&\leq  
u_{\hat{i}}(\hat{x},\hat{t}) - v_{\hat{i}}(\hat{y},\hat{s}) 
-\frac{\mu}{T-\hat{t}} -\delta|\hat{x}|^2 -\theta_{\mu,\delta} \\
&\leq0. 
 \end{aligned}
\]
Thus, we obtain 
\[
\lim_{\e\to0} \frac{|x_\e -y_\e|^2}{2\e}
=\lim_{\e\to0} \frac{|t_\e - s_\e|^2}{2\e} 
= 0. 
\]
To obtain a contradiction, suppose that $\hat{t}=0$. 
Then 
\[
\begin{aligned}
0&\leq
\limsup_{\e\to 0}
\left(
u_{\hat{i}}(x_\e,t_\e)-v_{\hat{i}}(y_\e,_\e) 
-\frac{\mu}{T-t_\e} -\delta|x_\e|^2
\right)
 -\theta_{\mu,\delta}\\
 &= u_{\hat{i}}(\hat{x},0) - v_{\hat{i}}(\hat{x},0) - \frac{\mu}{T} -\delta|\hat{x}|^2
 -\theta_{\mu,\delta}\\
 &\leq0. 
\end{aligned}
\]
Therefore, we obtain 
\[
u_{\hat{i}}(\hat{x},0) - v_{\hat{i}}(\hat{x},0) - \frac{\mu}{T} -\delta|\hat{x}|^2
= \theta_{\mu,\delta}. 
\]
which is impossible. 
Therefore, $\hat{t}$ must be positive.

By the Ishii lemma (see \cite[Lemma~2.3.23]{IS}, \cite[Chapter~3]{G}), it holds that 
\[
\begin{aligned}
\left(
\frac{t_\e -s_\e}{\e}, \frac{x_\e -y_\e}{\e}, X
\right)
&\in
\overline{PJ}^{2,+}
\left(
u_{\hat{i}}(x_\e,t_\e) -\delta|x_\e|^2 - \frac{\mu}{T-t_\e}
\right), \\
\left(
\frac{t_\e -s_\e}{\e}, \frac{x_\e -y_\e}{\e}, Y
\right)
&\in
\overline{PJ}^{2,-} v_{\hat{i}}(y_\e,s_\e)
\end{aligned}
\]
and 
\begin{equation*}%\label{eq:mat.ineq}
\begin{aligned}
-\frac{3}{\e}
\begin{pmatrix}
I & O\\
O & I
\end{pmatrix}
\leq 
\begin{pmatrix}
X & O\\
O & -Y
\end{pmatrix}
\leq 
\frac{3}{\e}
\begin{pmatrix}
I & -I\\
-I & I
\end{pmatrix}, 
\end{aligned}
\end{equation*}
where $\overline{PJ}^{2,+}$ are defined in~\eqref{eq:jet}. 
In general, 
\[
\begin{aligned}
\left(
\frac{t_\e -s_\e}{\e}, \frac{x_\e -y_\e}{\e}, X
\right)
&\in
\overline{PJ}^{2,+}
\left(
u_{\hat{i}}(x_\e,t_\e) -\delta|x_\e|^2 - \frac{\mu}{T-t_\e}
\right)
\end{aligned}
\]
is equivalent to
\[
\begin{aligned}
\left(
\frac{t_\e -s_\e}{\e} +  \frac{\mu}{(T-t_\e)^2}, 
\frac{x_\e -y_\e}{\e} +2\delta x_\e, 
X+2\delta I
\right)
&\in
\overline{PJ}^{2,+}
u_{\hat{i}}(x_\e,t_\e). 
\end{aligned}
\]
We now set $i:= \hat{i}$, $p_1=p$, $p_2=q$ and $j$ denotes $j\neq i$. 
In what following ,we drop the subscription $\e$ for simplisity. 
Since $(u_1,u_2)$ is a viscosity subsolution to~\eqref{eq:lem4.1} and 
$(v_1,v_2)$ is a viscosity supersolution to~\eqref{eq:lem4.1}, 
we have
\[
\begin{aligned}
\frac{t -s}{\e} +  \frac{\mu}{(T-t)^2}
+ F_{i} (x, X+ 2\delta I) 
+ \nu u_i(x,t) - e^{(p_i -1)\nu t} |u_j|^{p_i -1}u_j  \leq 0, \\
\frac{t -s}{\e} 
+ F_{i} (y, Y ) 
+ \nu v_i(y,s) - e^{(p_i -1)\nu t} |v_j|^{p_i -1}v_j  \geq 0. \\
\end{aligned}
\]
Combining these inequalities, we see that 
\begin{equation}\label{eq:lem.est1}
\begin{aligned}
&\frac{\mu}{(T-t)^2} + \nu(u_i(x,t)-v_i(y,s))
- e^{(p_i -1)\nu t} (|u_j|^{p_i -1}u_j -|v_j|^{p_i -1}v_j )\\
&\leq F_i(y,Y) -F_i (x,X+2\delta I). 
\end{aligned}
\end{equation}
By~Lemma~\ref{lem:structure},  
there exists a modulus of continuity $\omega_{F_i}:[0,\infty)\to\R$ 
such that 
\[
F_i (y,Y) - F_i(x,X)
\leq 
\omega_{F_i} 
\left(
|x-y| + \frac{|x-y|^2}{\e}
 \right) 
\]
for all $x,y\in\R^N$. 
This together with~\eqref{eq:ellipticity} implies that 
\begin{equation}\label{eq:lem.est2}
\begin{aligned}
& F_i(y,Y) -F_i (x,X+2\delta I) + F_i(x,X) - F_i(x,X) \\
&\leq 
\omega_{F_i}
\left(
|x-y| + \frac{|x-y|^2}{\e} 
\right)
+\Pu_i^+(-2\delta I)\\
&= 
\omega_{F_i}
\left(
|x-y| + \frac{|x-y|^2}{\e} 
\right)
+
2\delta \Lambda_i N .
\end{aligned}
\end{equation}
If $u_j(x,t) \leq v_j(y,s)$, then 
\begin{equation}\label{eq:lem.est3}
\nu(u_i -v_i) -e^{(p_i -1)\nu t} (|u_j|^{p_i -1}u_j -|v_j|^{p_i -1}v_j )
\geq 
\lambda(u_i -v_i)\geq
\lambda\frac{\theta}{2}>0. 
\end{equation}
On the other hand, if $u_j(x,t) > v_j(y,s)$, 
it follows from the mean value theorem 
and~\eqref{eq:prop4.2} 
 that 
\begin{equation}\label{eq:R}
\begin{aligned}
e^{(p_i -1)\nu t} (|u_j|^{p_i -1}u_j -|v_j|^{p_i -1}v_j )
&=
e^{(p_i -1)\nu t} 
\frac{|u_j|^{p_i -1}u_j -|v_j|^{p_i -1}v_j }{u_j -v_j}(u_j -v_j) \\
&\leq 
 R^{p_i -1}(u_j -v_j). 
\end{aligned}
\end{equation}
It follows from~\eqref{eq:lem.est3} and~\eqref{eq:R} that 
\[
\nu(u_i -v_i) -e^{(p_i -1)\nu t} (|u_j|^{p_i -1}u_j -|v_j|^{p_i -1}v_j )
\geq 
\nu(u_i -v_i) -  R^{p_i -1}(u_j -v_j). 
\]
Choose $\nu>0$ so large that 
\[
\nu > 
\max
\left\{
R^{p_1 -1}, R^{p_2 -1}
\right\} 
\]
to get 
\begin{equation}\label{eq:lem.est4}
\nu(u_i -v_i) -  R^{p_i -1}(u_j -v_j)
\geq 
(\nu -R^{p_i -1})(u_i-v_i)
\geq (\nu -R^{p_i -1}) \frac{\theta}{2} >0. 
\end{equation}

Summarazing~\eqref{eq:lem.est1}--\eqref{eq:lem.est3}, \eqref{eq:lem.est4}, 
we obtain 
\[
\frac{\mu}{(T-t)^2} + (\nu -R^{p_i -1}) \frac{\theta}{2}
\leq 
\omega_{F_i}
\left(
|x-y| + \frac{|x-y|^2}{\e} 
\right)
+
2\delta \Lambda_i N .
\]
Dropping the first term of the left-hand and then passing to the limit $\e\to0$, 
we finally obtain 
\[
(\nu -R^{p_i -1}) \frac{\theta}{2}
\leq 2\delta \Lambda_i N .
\]
Since $\delta$ is sufficiently small, this is a contradiction. 
\end{proof}

\appendix
%%%%%%%%%%%%%%%%
%%%%%%%%%%%%%%%%
\section{Perron's method}
%%%%%%%%%%%%%%%%
%%%%%%%%%%%%%%%%
In this appendix, we state Perron's method and give its proof. 
Let $T>0$ and let $S$ be the set of all viscosity solutions of \eqref{eq:P} in $\R^N\times(0,T)$. 
\begin{Lemma}\label{lem:B1}
Let $T>0$. 
Assume that $S\neq\emptyset$. 
For $i=1,2$, set 
\[
u_i (x,t):= \sup\{v_i(x,t) | \  v=(v_1,v_2) \in S\}, \quad x\in\R^N, t\in(0,T).  
\]
If 
\begin{equation*}%\label{eq:ass1}
\sup_K |u_i| < \infty ,\quad i=1,2
\end{equation*}
for any compact subset $K\subset \R^N \times (0,T)$, 
then $u=(u_1,u_2)$ is a viscosity subsolution of \eqref{eq:P}.  
\end{Lemma}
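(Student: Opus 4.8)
The plan is to reduce the system statement to the classical scalar Perron lemma by \emph{freezing} the coupling terms. Write $g_1 := |u_2^*|^{p-1}u_2^*$ and $g_2 := |u_1^*|^{q-1}u_1^*$, where $u_i^*$ denotes the upper semicontinuous envelope of $u_i$; the local boundedness hypothesis makes $u_i^*$, hence $g_i$, finite-valued and locally bounded, and $g_i$ is upper semicontinuous because $s\mapsto|s|^{p-1}s$ is continuous and nondecreasing and the composition of an upper semicontinuous function with a continuous nondecreasing function is upper semicontinuous. The first step is to observe that every $v=(v_1,v_2)\in S$ is, componentwise, a viscosity subsolution of the \emph{scalar} equation $\partial_t w + F_i(x,D^2 w) = g_i(x,t)$ for $i=1,2$. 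Indeed, if $\phi\in C^{2,1}$ and $v_i^*-\phi$ has a local maximum at $(x,t)$, then Definition~\ref{dfn:vsol} gives $\partial_t\phi + F_i(x,D^2\phi) \le |v_j^*|^{p_i-1}v_j^*$ at $(x,t)$, where $j\neq i$ and $p_1=p$, $p_2=q$; since $v_j\le u_j$ pointwise we have $v_j^*\le u_j^*$, and monotonicity of $s\mapsto|s|^{p-1}s$ gives $|v_j^*|^{p_i-1}v_j^*\le g_i$, which is the asserted scalar inequality. Because $u_i=\sup_{v\in S}v_i$, it therefore suffices to prove the scalar fact: the supremum of a family of viscosity subsolutions of a scalar equation with a fixed upper semicontinuous right-hand side, locally bounded above, is again a viscosity subsolution.

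For the scalar statement I would run the standard maximum-point argument. Fix $\phi\in C^{2,1}$ and suppose $u_i^*-\phi$ attains a local maximum at $(x_0,t_0)\in\R^N\times(0,T)$; subtracting a constant from $\phi$ and adding the penalization $|x-x_0|^4+(t-t_0)^2$ — which leaves $\partial_t\phi$, $D\phi$, $D^2\phi$ unchanged at $(x_0,t_0)$ — we may assume $(x_0,t_0)$ is a strict maximum of $u_i^*-\phi$ over a closed cylinder $\overline Q=\overline{B(x_0,r)}\times[t_0-r,t_0+r]$ with $(u_i^*-\phi)(x_0,t_0)=0$. By definition of the envelope there are $(y_k,s_k)\to(x_0,t_0)$ with $u_i(y_k,s_k)\to u_i^*(x_0,t_0)$, and for each $k$ a $v^k\in S$ with $v_i^k(y_k,s_k)\ge u_i(y_k,s_k)-1/k$. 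Since $(v_i^k)^*$ is upper semicontinuous and finite on $\overline Q$, $(v_i^k)^*-\phi$ attains its maximum $M_k$ over $\overline Q$ at some $(x_k,t_k)$. From $M_k\ge (v_i^k)^*(y_k,s_k)-\phi(y_k,s_k)\ge u_i(y_k,s_k)-1/k-\phi(y_k,s_k)\to0$ together with $M_k\le\max_{\overline Q}(u_i^*-\phi)=0$ one gets $M_k\to0$; extracting a convergent subsequence $(x_k,t_k)\to(\bar x,\bar t)\in\overline Q$ and using $(v_i^k)^*\le u_i^*$ and upper semicontinuity of $u_i^*$ gives $(u_i^*-\phi)(\bar x,\bar t)\ge 0$, so strictness forces $(\bar x,\bar t)=(x_0,t_0)$; hence the whole sequence converges and $(x_k,t_k)$ is interior to $Q$ for large $k$. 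Applying the scalar subsolution property of $v^k_i$ at the interior local maximum $(x_k,t_k)$ yields $\partial_t\phi(x_k,t_k)+F_i(x_k,D^2\phi(x_k,t_k))\le g_i(x_k,t_k)$; letting $k\to\infty$, continuity of $F_i$ and of the derivatives of $\phi$ handles the left side, while upper semicontinuity of $g_i$ gives $\limsup_k g_i(x_k,t_k)\le g_i(x_0,t_0)$, so
\[
\partial_t\phi(x_0,t_0)+F_i(x_0,D^2\phi(x_0,t_0))\le g_i(x_0,t_0)=|u_j^*(x_0,t_0)|^{p_i-1}u_j^*(x_0,t_0).
\]
This is exactly the subsolution inequality for the $i$-th equation of \eqref{eq:P}, and since $i$ was arbitrary, $u=(u_1,u_2)$ is a viscosity subsolution.

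The main obstacle — indeed essentially the only non-routine point — is the reduction in the first step: recognizing that, because $s\mapsto|s|^{p-1}s$ is monotone, the weakly coupled system decouples at the level of the Perron construction once the coupling terms are frozen at the envelopes $u_j^*$, so that no comparison-principle input is needed and the scalar argument applies verbatim. The rest is bookkeeping: verifying that $g_i$ is finite and upper semicontinuous (this is where the local boundedness hypothesis enters), the strict-maximum perturbation, and the compactness argument that the perturbed maximizers $(x_k,t_k)$ converge to $(x_0,t_0)$ while remaining interior so that the test inequality is legitimately available at $(x_k,t_k)$.
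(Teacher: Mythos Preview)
Your proof is correct and follows essentially the same approach as the paper: both construct approximating subsolutions $v^k\in S$, locate maxima $(x_k,t_k)$ of $(v_i^k)^*-\phi$ on a compact cylinder, show $(x_k,t_k)\to(x_0,t_0)$ by strictness and the inequality $(v_i^k)^*\le u_i^*$, and then pass to the limit in the subsolution inequality using $(v_j^k)^*\le u_j^*$ together with the monotonicity of $s\mapsto|s|^{p-1}s$. Your explicit ``freezing'' of the coupling as $g_i=|u_j^*|^{p_i-1}u_j^*$ and reduction to the scalar Perron lemma is a cleaner way to organize the same computation the paper carries out directly.
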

\begin{proof}
For $i=1,2$ and $\vphi\in C^{2,1}(\R^N\times(0,T))$, we assume that $u_i^*-\vphi$ attains strict maximum at $(x_0,t_0)\in \R^N \times(0,T)$. 
Choose $r>0$ so that $[t_0-r,t_0+r]\subset (0,T)$. 
Then for any $\e>0$, we have
\[
u_i^*(x_0,t_0) 
= \lim_{\e\to0} \sup_{\substack{x\in B(x_0,\e) \\ |t-t_0|<\e}} u_i(x,t)
\leq
\sup_{\substack{x\in B(x_0,\e) \\ |t-t_0|<\e}} u_i(x,t), 
\]
where $u_i^*$ us defined in~\eqref{usce}. 
For all $\tau>0$, there exist sequences $x_{\tau,\e}\in B(x_0,\e)$ and 
$t_{\tau,\e}\in (t_0-r, t_0+r)$ such that 
\[
\sup_{\substack{x\in B(x_0,\e) \\ |t-t_0|<\e}} u_i(x,t) 
\leq u_i(x_{\tau,\e},t_{\tau,\e}) +\tau. 
\]
For any $\tau>0$ there exists $\delta_\tau >0$ such that for all $x\in\R^N$, $t\in(0,T)$, if 
$|x-x_0| + |t-t_0|<\delta_\tau$, then $|\vphi(x,t) - \vphi(x_0,t_0)|<\tau$. 
For each $k=1,2,\dots$ set 
\[
\e=\min
\left\{
\frac{\delta_{1/k}}{2}, \frac{1}{k},r
\right\}. 
\]
There exist $x_k \in B_r(x_0)$, $t_k\in(t_0-r,t+r)$ such that 
\[
\begin{aligned}
&x_k \to x_0, \quad t_k\to t_0 \quad \mathrm{as} \ k\to\infty,\\
&u_i^*(x_0,t_0) < u_i(x_k,t_k) + \frac{1}{k}, \quad 
 |\vphi(x_k,t_k)- \vphi(x_0,t_0)| <\frac{1}{k}.  
\end{aligned}
\]
Moreover, by the  definition of $u_i$, there exists $(u_1^k,u_2^k)\in S$ such that 
\[
u_i(x_k,t_k) < u_i^k(x_k,t_k) +\frac{1}{k}. 
\]
Choose $(y_k,s_k)\in\overline{B}_r(x_0)\times[t_0-r,t_0+r]$ 
so that  $(u_i^k)^* -\vphi$ attains its maximum at $(y_k,s_k)$. 
Taking a subsequence (still denoted by the same symbol), we see that as $k\to\infty$, 
$y_k \to \hat{y}$, $t_k\to\hat{t}$ 
for some $\hat{y}\in \overline{B}_r(x_0)$, $t_k\to\hat{t}\in[t_0-r,t_0+r]$. 
We have 
\begin{equation}\label{eq:limsup}
\begin{aligned}
(u_i^*-\vphi)(x_0,t_0) 
& < (u_i^*-\vphi)(x_k,t_k) +\frac{3}{k}\\
&\leq ((u_i^k)^*-\vphi)(x_k,t_k) +\frac{3}{k}\\
&\leq ((u_i^k)^*-\vphi)(y_k,s_k) +\frac{3}{k}\\
&\leq (u_i^*-\vphi)(y_k,s_k) +\frac{3}{k}. \\
\end{aligned}
\end{equation}
Since $u_i^*$ is lower semicontinuous, we see that 
\[
(u_i^* -\vphi)(x_0,t_0) \leq (u_i^* - \vphi)(\hat{y},\hat{t}). 
\]
On the other hand, $u_i^* -\vphi$ has a strict maximum at $(x_0,t_0)$, hence
$x_0=\hat{x}$ and $t_0 =\hat{t}$. 
Therefore, $y_k\in \overline{B}_r(x_0)$, $t_k\in [t_0-r,t_0+r]$ for sufficiently large $k$. 
In addition, using~\eqref{eq:limsup} again, we have
\[
\lim_{k\to\infty}(u_i^k)^*(y_k,s_k) = u_i^*(x_0,t_0) 
\]
and 
\[
\begin{aligned}
\limsup_{k\to\infty} (u_i^k)^*(y_k,s_k) 
\leq
\limsup_{k\to\infty}u_j^*(y_k,s_k)
\leq u_j^*(x_0,t_0), \\
\end{aligned}
\]
where $j\neq i$. 
Consequently, 
we obtain 
\[
\partial_t \vphi(y_k,s_k) + F_i(y_k,D^2\vphi(y_k,s_k)) 
\leq|u_j^*(y_k,s_k)|^{p_i -1}u_j^*(y_k,s_k), 
\]
hence 
\[
\partial_t \vphi(x_0,t_0) + F_i(x_0,D^2\vphi(x_0,t_0)) 
\leq|u_j^*(x_0,t_0)|^{p_i -1}u_j^*(x_0,t_0), 
\]
which completes the proof. 
\end{proof}

\begin{Prop}
Assume that $\xi=(\xi_1,\xi_2)\in (L^{\infty}_{\mathrm{loc}}(\R^N\times(0,T)))^2$ 
is a viscosity subsolution of \eqref{eq:P} 
and $\eta=(\eta_1,\eta_2)\in (L^{\infty}_{\mathrm{loc}}(\R^N\times(0,T)))^2$ 
is a viscosity supersolution of \eqref{eq:P} for some $T>0$. 
If $\xi$ and $\eta$ satisfy 
\begin{equation*}%\label{eq:ass2}
\xi_1 \leq \eta_1, \quad \xi_2 \leq \eta_2 \quad \mathrm{in} \ \R^N \times(0,T), 
\end{equation*}
then 
\[
u_i(x,t) := \sup\{v_i(x,t) | v=(v_1,v_2)\in S, \ \xi \leq v \leq \eta\}
\]
is a viscosity solution of \eqref{eq:P} in $\R^N\times(0,T)$. 
Here 
$\xi \leq v \leq \eta$ means that $\xi_i \leq v_i \leq \eta_i$ in $\R^N\times(0,T)$ for $i=1,2$. 
\end{Prop}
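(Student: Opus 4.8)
The plan is to carry out the standard Perron's method argument, adapted to the weakly coupled system. The function $u=(u_1,u_2)$ is defined as the supremum over the nonempty family $\mathcal{S}_\xi := \{v \in S : \xi \leq v \leq \eta\}$ (nonempty because, e.g., a solution produced by the fixed-point argument in Section~3 can be squeezed between $\xi$ and $\eta$ via the comparison principle; alternatively $\xi$ itself, if it happens to be a solution, or one invokes that $\mathcal S$ is nonempty by Theorem~\ref{thm:existence }). First I would record that $u$ is well-defined and satisfies $\xi \leq u \leq \eta$, so that $u_i \in L^\infty_{\mathrm{loc}}(\R^N\times(0,T))$ and in particular $\sup_K|u_i|<\infty$ for every compact $K$. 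Then Lemma~\ref{lem:B1} applies verbatim and gives that $u=(u_1,u_2)$ is a viscosity subsolution of \eqref{eq:P}.

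The core of the proof is to show that $u$ is also a viscosity supersolution. I would argue by contradiction: suppose for some index $i$ there is $\varphi\in C^{2,1}(\R^N\times(0,T))$ such that $(u_i)_* - \varphi$ attains a strict local minimum at $(x_0,t_0)$ with $(u_i)_*(x_0,t_0)=\varphi(x_0,t_0)$, but the supersolution inequality fails:
\[
\partial_t\varphi(x_0,t_0) + F_i(x_0,D^2\varphi(x_0,t_0)) - |u_j(x_0,t_0)|^{p_i-1}u_j(x_0,t_0) < 0,
\]
where $j\neq i$ and $(p_1,p_2)=(p,q)$. The idea is to ``bump up'' $u_i$ near $(x_0,t_0)$: set $w_i := \max\{u_i,\ \varphi + \kappa - \gamma(|x-x_0|^2+|t-t_0|^2)\}$ on a small parabolic neighborhood of $(x_0,t_0)$ and $w_i := u_i$ outside, for suitably small $\kappa,\gamma>0$, and leave $w_j := u_j$ unchanged for the other component. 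By continuity of $F_i$ and of $\varphi$, the strict inequality persists on a small neighborhood, so the bumped function is still a subsolution of the $i$-th equation there; away from $(x_0,t_0)$ the max of two subsolutions is a subsolution; and the $j$-th equation is untouched, so $(w_1,w_2)$ is a viscosity subsolution of the full system \eqref{eq:P}. One must check $\xi \leq w \leq \eta$: the lower bound is clear since $w \geq u \geq \xi$, and the upper bound $w_i \leq \eta_i$ follows by choosing $\kappa,\gamma$ small, using that $\eta_i$ is a supersolution while $\varphi+\kappa-\gamma(\cdots)$ locally touches $(u_i)_*\leq \eta_i$ from below strictly at the boundary of the bump region — here the comparison-type/touching argument of the classical Perron proof is used. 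Then $w\in\mathcal{S}_\xi$, so $w_i \leq u_i$ by definition of the supremum; but $w_i(x_0,t_0) = \varphi(x_0,t_0)+\kappa > (u_i)_*(x_0,t_0)$, and since $w_i$ is lower semicontinuous and $\geq u_i$ one gets $w_i \geq (u_i)_*$, forcing $w_i(x_0,t_0) \le u_i(x_0,t_0)$; combined with $w_i(x_0,t_0) > (u_i)_*(x_0,t_0) = $ the value realized by the sup near $(x_0,t_0)$, this yields the contradiction.

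The main obstacle, as usual in Perron's method, is the bookkeeping around semicontinuous envelopes and making the ``bump'' legitimately improve the supremum: one must ensure that the perturbation $w_i$ genuinely exceeds $u_i$ at points arbitrarily close to $(x_0,t_0)$ (so that it cannot be dominated by $u_i$ there), while staying below $\eta_i$ globally, and that it does not break the subsolution property at the matching interface. The coupling is in fact a mild complication rather than a genuine difficulty: since each equation \eqref{eq:P} involves $D^2u_i$ only through the single unknown $u_i$ and the other unknown enters only as a zeroth-order forcing term, bumping one component and freezing the other is consistent, and the subsolution test for the $i$-th equation uses $u_j^*$ evaluated at the test point, which changes continuously — so the strict inequality is stable. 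I would also invoke Theorem~\ref{thm:CP} (or its Proposition form) only to guarantee $\mathcal{S}_\xi \neq\emptyset$ and, if desired, to conclude $\xi\le u\le\eta$ is consistent. The routine $\e$–$\delta$ estimates mirror those in the proof of Lemma~\ref{lem:B1} and I would not reproduce them in full.
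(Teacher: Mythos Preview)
Your outline is essentially the same Perron argument the paper carries out: Lemma~\ref{lem:B1} gives the subsolution property, and a bump $w_i=\max\{u_i,\varphi+\tau\}$ near a point where the supersolution inequality fails (leaving $w_j=u_j$) produces a larger admissible subsolution, contradicting maximality. Two places in your sketch should be sharpened to match what actually makes the argument close. First, in the failed supersolution inequality the coupling term must be $|(u_j)_*|^{p_i-1}(u_j)_*$, not $|u_j|^{p_i-1}u_j$; this matters because when you then verify that $(w_1,w_2)$ is a subsolution you need to pass from $(u_j)_*$ to $u_j^*=w_j^*$ via monotonicity of $r\mapsto|r|^{p_i-1}r$, and the persistence of the strict inequality on a neighborhood uses the lower semicontinuity of $(u_j)_*$. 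Second, your justification of $w_i\le\eta_i$ is not quite the right mechanism: it is not a boundary-touching argument but a center-point one. One shows directly that $\varphi(x_0,t_0)=(u_i)_*(x_0,t_0)<(\eta_i)_*(x_0,t_0)$, because equality would make $(\eta_i)_*-\varphi$ attain a minimum at $(x_0,t_0)$, and the supersolution property of $\eta$ together with $(\eta_j)_*\ge(u_j)_*$ would then contradict the assumed strict failure; once this strict gap is established, lower semicontinuity of $(\eta_i)_*$ gives $\varphi+\tau<\eta_i$ on a small neighborhood for $\tau$ small. With these two fixes your proof coincides with the paper's.
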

\begin{proof}
By Lemma~\ref{lem:B1}, $u=(u_1,u_2)$ is a viscosity solution to~\eqref{eq:P}. 
Suppose, contrary to our claim, that 
there exist $\vphi \in C^{2,1}$ and $(x_0,t_0)\in \R^N\times(0,T)$,  $u_{i*} -\vphi$ attains a strict minimum at $(x_0,t_0)$, 
$(u_{i*}-\vphi)(x_0,t_0) =0$ for some $i=1,2$ and exists $\theta>0$ such that 
\begin{equation}\label{eq:lem:B1}
\partial_t \vphi + F(x_0,D^2\vphi) - |u_{i*}|^{p_i -1}u_{i*} < -\theta 
\end{equation}
at $(x_0,t_0)$, 
where $i\neq j$ and $p_1 =p$, $p_2 =q$. 

We firstly show that $\vphi(x_0,t_0) <\eta_{i*}(x_0,t_0)$. 
In fact, we see that $\vphi \leq u_{i*} \leq \eta_{i*}$ and $u_{j*} \leq \eta_{j*}$
and $\eta_{i*} - \vphi$ attains a minimum at $(x_0,t_0)$ 
if $\vphi(x_0,t_0) = \eta_{i*}(x_0,t_0)$, thus by the definition of the viscosity supersolution, we obtain 
\[
\partial_t \vphi(x_0,t_0) + F(x_0,D^2\vphi(x_0,t_0))
\geq |\eta_{j*}|^{p_j - 1}\eta_{j*} (x_0,t_0)
\geq |u_{j*}|^{p_j -1}u_{j*}(x_0,t_0). 
\]
This contradicts the assumption. 

For any $\rho>0$, there exists $\e_\rho>0$ such that 
\[
\begin{aligned}
u_{j*}(x_0,t_0) -\rho 
&=
\sup_{\e>0}\inf_{\substack{|x-x_0|<\e_\rho\\|t-t_0|<\e_\rho}} u_j(x,t) -\rho \\
&< \inf_{\substack{|x-x_0|<\e_\rho \\|t-t_0|<\e_\rho}} u_j(x,t)\\
&\leq u_j(x,t)
\end{aligned}
\]
for $|x-x_0|<\e_\rho$ and $|t-t_0|<\e_\rho$. 
By using the mean value theorem, there exists $\hat{\theta}$ such that 
\[
\begin{aligned}
|u_{j*}^{p_i -1}u_j 
&\geq
|u_{j*}(x_0,t_0) -\rho|^{p_i -1}(u_{j*}(x_0,t_0)-\rho) \\
&=
|u_{j*}(x_0,t_0) |^{p_i -1}u_{j*}(x_0,t_0) \\
&- \rho p_i 
|
\hat{\theta} u_{j*}(x_0,t_0)(x_0,t_0) 
+ (1-\hat{\theta})(u_{j*}(x_0,t_0)-\rho)
|^{p_i -1}.  
\end{aligned}
\]
We can find $s_0>0$ so small that 
\[
\rho p_i 
|
\hat{\theta} u_{j*}(x_0,t_0)(x_0,t_0) 
+ (1-\hat{\theta})(u_{j*}(x_0,t_0)-\rho)
|^{p_i -1} < \frac{\theta}{4}
\]
and 
\[
\vert 
\partial_t\vphi(x_0,t_0) + F_i(x_0,D^2\vphi(x_0,t_0)) 
- \partial_t\vphi(x,t) - F_i(x,D^2\vphi(x,t)) 
\vert 
<\frac{\theta}{4}
\]
for $|x-x_0|<s_0$ and $|t-t_0|<s_0$. 
This together with \eqref{eq:lem:B1} and the continuity of $\partial_t\vphi$ and $F_i(\cdot,D^2\vphi)$ implies that 
\[
\begin{aligned}
-\theta 
%&> \partial_t \vphi (x_0,t_0) + F(x_0,D^2\vphi) -|u_{j*}|^{p_i-1}u_{j*}(x_0,t_0)\\
&> 
\partial_t \vphi (x,t) +F_i(x,D^2\vphi(x,t)) -\frac{\theta}{4}
-|u_{j*}|^{p_i-1}u_{j*}(x,t) -\frac{\theta}{4} 
\end{aligned}
\]
for $|x-x_0| < s_0$ and $|t-t_0|<s_0$. 
Therefore, 
\begin{equation}\label{eq:phi}
\partial_t \vphi (x,t) +F_i(x,D^2\vphi(x,t)) 
-|u_{j*}|^{p_i-1}u_{j*}(x,t) 
< -\frac{\theta}{2}. 
\end{equation}
It is already shown that $u_{i*}(x_0,t_0)=\vphi(x_0,t_0) <\eta_{i*}(x_0,t_0)$. 
Set 
\[
3\hat{\tau} :=\eta_{i*}(x_0,t_0) -u_{i*}(x_0,t_0)>0. 
\] 
Since $\eta_{i*}$ is lower semicontinuous and $\vphi$ is continuous, we can find $s_1\in(0,s_0)$ 
such that for all $|x-x_0| <s_1$ and $t\in(t_0 -s_1,t_0+s_1)$, 
\[
\eta_{i*}(x,t) -\vphi(x,t)
> 
\eta_{i*}(x_0,t_0) -\vphi(x_0,t_0) - \hat{\tau} = 2 \hat{\tau}. 
\]
Therefore $\vphi(x,t) +2\hat{\tau} < \eta_{i*}(x,t))$ in $D$, where 
\[
D:= B_{s_1}(x_0) \times(t_0-s_1,t_0+s_1). 
\]
On the other hand, since $u_{i*}-\vphi$ attains a strict minimum at $(x_0,t_0)$ and $(u_{i*}-\vphi)(x_0,t_0)=0$, there exist $\e\in(0,s_1 /2)$ and $\tau_0 \in(0,\hat{\tau})$ such that 
\[
(u_{i*}-\vphi)(x,t) \geq \min_{(x,t)\in A}\{ (u_{i*}-\vphi)(x,t)\} > \tau_0. 
\] 
Here we have set 
\[
A := 
\left(
\overline{B_{s_1 /2 +\e}(x_0)}\setminus B_{s_1 /2 -\e}(x_0)
\right)
\times 
\left\{
t \mid \frac{s_1}{2}-\e \leq |t-t_0| \leq \frac{s_1}{2} +\e
\right\}.
\]
We now define $(w_1,w_2)$ by 
\[
\begin{aligned}
w_i(x,t) &:=
\begin{cases}
\max\{u_i(x,t),\vphi(x,t) + \tau_0\} \quad &\mathrm{in} \ D/2,\\
u_i(x,t) \quad \mathrm{in} \ (\R^N\times(0,T)) \setminus (D/2),
\end{cases}
\\
w_j (x,t) &:= u_j(x,t) \quad \mathrm{in} \ \R^N\times(0,T), 
\end{aligned}
\]
where 
\[
D/2 := B_{s_1 /2}(x_0) \times \left(t_0- \frac{s_1}{2},t_0+ \frac{s_1}{2} \right). 
\]
In what follows, we shall show that $(w_1,w_2)$ is a viscosity subsolution to~\eqref{eq:P} in $\R^N\times(0,T)$ satisfying $\xi_k \leq w_k \leq \eta_k$ for $k=1,2$. 
It follows from the definition of $w_i$ that we have $\xi_i \leq u_i \leq w_i$ in 
$\R^N\times(0,T)$. 
Since $\vphi(x,t) + \tau_0 \leq \eta_{i*}$ in $D$, we see that $\vphi + \tau_0 \leq \eta_i$ in $D$, 
hence $w_k \leq \eta_k$ in $\R^N\times(0,T)$ for $k=1,2$. 
Consequently, for $k=1,2$, we obtain 
\[
\xi_k \leq w_k \leq \eta_k \quad \mathrm{in} \ \R^N\times(0,T). 
\]
We can find $n\in\{1,2,\dots\}$ sufficiently large so that
\[
\frac{1}{n} < \frac{s_1}{2} -\e \quad \mathrm{and} \quad 
\frac{1}{n} < \frac{\tau_0}{2} 
\]
and there exist $x_n \in B_{1/n}(x_0)$ and $t_n\in\R$ with $|t_0-t_n|<1/n$ 
such that 
\[
u_i (x_n,t_n) 
< u_{i*}(x_0,t_0) + \frac{1}{n}. 
\]
Moreover, it follows that 
\[
 u_{i*}(x_0,t_0) + \frac{1}{n}
 <u_{i*}(x_0,t_0) + \frac{\tau_0}{2} 
 = \vphi(x_0,t_0) + \frac{\tau_0}{2}
 < \vphi(x_0,t_0) + \tau_0. 
\]
Note that $(x_n,t_n) \in D/2$. 

In what follows, we shall prove that $(w_1,w_2)$ is a viscosity subsolution to~\eqref{eq:P} in $\R^n\times(0,T)$.  
Let us take $\R^N\times(0,T)$ and $\psi\in C^{2,1}(\R^N\times(0,T))$ arbitrarily. 

We firstly assume that 
$w_{i}^* - \psi$ attains a local maximum at $(\hat{x},\hat{t})$. 
%We divide the cases where $(\hat{x},\hat{t})$  is into t steps. 
Consider the first case 
$w_i^* (\hat{x},\hat{t}) = u_i^* (\hat{x},\hat{t})$. 
Then 
\[
\begin{aligned}
u_i^* (\hat{x},\hat{t}) -\psi(\hat{x},\hat{t}) 
&= w_i^*(\hat{x},\hat{t}) - \psi (\hat{x},\hat{t}) \\
&\geq w_i^* (x,t) -\psi(x,t)\\
&\geq u_i^* (x,t) - \psi(x,t)
\end{aligned}
\]
in $\R^N\times(0,T)$. 
Thus, $u_i^* -\psi$ attains its maximum at $(\hat{x},\hat{t})$. 
%%$(\hat{x},\hat{t})\in \R^N\setminus(D/2)$. 
%%It follows from $w_i =u_i$ in $\R^N\setminus(D/2) \cup A$ 
%%that
%$u_i^* - \psi$ attains a local maximum at $(\hat{x},\hat{t})$. 
Moreover, since $(u_1,u_2)$ is a subsolution to~\eqref{eq:P} in $\R^N\times(0,T)$ and $u_j \equiv w_j$, we have
\begin{equation*}%\label{eq:w*}
\partial_t \psi + F_i(\cdot,D^2\psi) 
\leq |u_j^*|^{p_i -1}u_j^* 
=|w_j^*|^{p_i -1}w_j^*
\end{equation*}
at $(\hat{x},\hat{t})$. 

We next consider the second case 
$w_i^*(\hat{x},\hat{t}) =(\vphi +\tau_0)^*(\hat{x},\hat{t}) =\vphi(\hat{x},\hat{t})+\tau_0$. 
Note that $(\hat{x},\hat{t})\in D/2$. 
The same argument above implies that $\vphi+\tau_0-\psi$ attains its maximum at $(\hat{x},\hat{t})$. 
Thus, we see that 
\[
\partial_t \vphi(\hat{x},\hat{t}) = \partial_t \psi(\hat{x},\hat{t}), \quad
D\vphi(\hat{x},\hat{t}) = D\psi(\hat{x},\hat{t}), \quad 
D^2 \vphi(\hat{x},\hat{t}) \leq D^2 \psi(\hat{x},\hat{t}). 
\]
It follows from~\eqref{eq:ellipticity} and~\eqref{eq:phi} that 
\[
\begin{aligned}
\partial_t  \psi + F_i(\cdot,D^2\psi) 
&\leq 
\partial_t  \vphi + F_i(\cdot,D^2\vphi) \\
&\leq |{u_j}_*|^{p_i -1} {u_j}_* \\
&\leq |{u_j}^*|^{p_i -1} {u_j}^* \\
&= |{w_j}^*|^{p_i -1} {w_j}^*
\end{aligned}
\]
at $(\hat{x},\hat{t})$. 

We secondly assume that 
$w_{j}^* - \psi$ attains a local maximum at $(\hat{x},\hat{t})$. 
Since $w_j =u_j$ in $\R^N\times(0,T)$, 
$u_j^*-\psi $ also attains its maximum at $(\hat{x},\hat{t})$. 
Therefore, we obtain
\[
\begin{aligned}
\partial_t \psi + F_j(\cdot,D^2\psi) 
\leq |u_i^*|^{p_j -1}u_i^*
\leq |w_i^*|^{p_j -1}w_i^* 
\end{aligned}
\] 
at $(\hat{x},\hat{t})$. 

Consequently, $(w_1,w_2)$ is a viscosity subsotlution to~\eqref{eq:P} in $\R^N\times(0,T)$. 
This contradicts the definition of $(u_1,u_2)$. 
\end{proof}
\begin{Remark}
Let $f_1$ and $f_2$ be a real valued function defined in a subset of $\R^M$ with $M\in\{1,2,\dots\}$. 
Then $ \max\{f_1,f_2\}^* = \max\{f_1^*,f_2^*\}$. 
This fact allows us to divide the cases $u_i^* = w_i^*$ or not. 
\end{Remark}
%

%%%%%%%%%%%%%%%%%%%%%%
\section*{Acknoledgement}
TK was partially supported by Grant-in-Aid for Early-Career Scientists JSPS KAKENHI Grant Number 18K13436 and Tottori University of Environmental Studies Grant-in-Aid for Special Research. 
RS was partially supported by Grant-in-Aid for Early-Career Scientists JSPS KAKENHI Grant Number 18K13435 and funding from Fukuoka University (Grant No. 205004).

%%%%%%%%%%%%%%%%%%%%%%%%%%%%%%%%%%%%%%
%%%%%%%%%%%%    references    %%%%%%%%%%%%%%%%%%
%%%%%%%%%%%%%%%%%%%%%%%%%%%%%%%%%%%%%%

%


\begin{thebibliography}{10}
\bibitem{AT}
S. N.  Armstrong and M. Trokhimtchouk, Long-time asymptotics for fully nonlinear homogeneous parabolic equations, Calc. Var. Partial Differential Equations \textbf{38}  (2010), 521--540. 

%\bibitem{BNV} H. Berestycki, L. Nirenberg and S. R. S.  Varadhan, The principal eigenvalue and maximum principle for second-order elliptic operators in general domains, 
% Comm. Pure Appl. Math.  {\bf 47} (1994), 47--92. 


\bibitem{CL}
M. G. Crandall,  P.-L.  Lions, Quadratic growth of solutions of fully nonlinear second order equations in $R^n{}$, 
 Differential Integral Equations \textbf{3} (1990), 601--616. 

%\bibitem{D}
%K. Deng, Blow-up rates for parabolic systems, 
% Z. Angew. Math. Phys. {\bf 47} (1996), 132--143. 
 
\bibitem{DL}
K. Deng, H. A. Levine, The role of critical exponents in blow-up theorems: the sequel,  J. Math. Anal. Appl.  \textbf{243}  (2000), 85--126. 
 
%\bibitem{DM}
%P. Dr\'{a}bek, J. Milota, Methods of nonlinear analysis, 
%Applications to differential equations, 
%Second edition, 
%Birkh\"{a}user Advanced Texts: Basler Lehrb\"{u}cher, Birkh\"{a}user/ Springer Basel AG, Basel, 2013. 

\bibitem{EH1}
M. Escobedo and M. A. Herrero, 
Boundedness and blow up for a semilinear reaction-diffusion system, 
JDE \textbf{89}, (1991), 176--202. 

%\bibitem{EH2}
%M. Escobedo and M. A. Herrero, A uniqueness result for a semilinear reaction-diffusion system, 
%Proc. Amer. Math. Soc.  112  (1991), 175--185. 

\bibitem{FI1} Y. Fujishima, K. Ishige, 
Blowing up solutions for nonlinear parabolic systems with unequal elliptic operators, 
 J. Dynam. Differential Equations  \textbf{32}  (2020), 1219--1231. 

\bibitem{FI2}  Y. Fujishima, K. Ishige, 
Initial traces and solvability of Cauchy problem to a semilinear parabolic system, 
 J. Math. Soc. Japan \textbf{73}  (2021), 1187--1219. 


\bibitem{F}
H. Fujita, On the blowing up of solutions of the Cauchy problem for 
$u_{t}=\bigtriangleup u+ u^{1+\alpha}$,  J. Fac. Sci. Univ. Tokyo Sect. I \textbf{13} 109--124 (1966). 

%\bibitem{FIW}
%K. Fujiwara, M. Ikeda, Y. Wakasugi, Lifespan of solutions for a weakly coupled system of semilinear heat equations, 
% Tokyo J. Math.  43  (2020), 163--180. 


\bibitem{G} 
Y. Giga, Surface evolution equations, 
A level set approach, 
Monographs in Mathematics, {\bf 99} Birkh\"{a}user Verlag, Basel,  2006. %xii+264 pp. 

\bibitem{HV}
Y. Huang and J.L. V'{a}zquez, Large-time geometrical properties of solutions of the Barenblatt equation of elasto-plastic filtration, 
 J. Differential Equations  \textbf{252}  (2012), 4229--4242. 

\bibitem{IS}
C. Imbert and L. Silvestre, 
An introduction to fully nonlinear parabolic equations, 
An introduction to the K\"{a}hler-Ricci flow, 
Lecture Notes in Math. \textbf{2086} Springer, Cham, 2013, 7--88.

\bibitem{KPV}
S. Kamin,  L. A.  Peletier, J. L. V\'{a}zquez, On the Barenblatt equation of elastoplastic filtration, 
 Indiana Univ. Math. J.  \textbf{40}  (1991), 1333--1362. 

\bibitem{Ko}
S. Koike,
A Beginner’s Guide to the Theory of Viscosity Solutions,
MSJ memoir \textbf{13}, Math. Soc. Japan, 2004. 

\bibitem{K}
N. V. Krylov, 
Sobolev and viscosity solutions for fully nonlinear elliptic and parabolic equations, 
Mathematical Surveys and Monographs, \textbf{233},
American Mathematical Society, Providence, RI,  2018. 



\bibitem{L}
G. M. Lieberman, 
Second order parabolic differential equations, 
World Scientific Publishing Co., Inc., River Edge, NJ,  1996. 

\bibitem{Me} 
P. Meier, On the critical exponent for reaction-diffusion equations, 
 Arch. Rational Mech. Anal. \textbf{109}  (1990), 63--71. 

\bibitem{MQ1}
R. Meneses and  A.  Quaas, Fujita type exponent for fully nonlinear parabolic equations and existence results, 
 J. Math. Anal. Appl. \textbf{376}  (2011), 514--527. 
 
\bibitem{MQ2}
R. Meneses and  A. Quaas, Existence and non-existence of global solutions for uniformly parabolic equations, J. Evol. Equ. \textbf{12}  (2012), 943--955. 

%\bibitem{Mo}
%K. Mochizuki, Blow-up, lifespan and large time behavior of solutions of a weakly coupled system of reaction diffusion equations, Adv. Math. Appl. Sci. {\bf 48}, World Scientific (1998), 175-198. 

%\bibitem{TS} H. Takase and B. D.  Sleeman, Non-existence of global solutions to anisotropic Fujita-type systems of semilinear parabolic equations, 
% R. Soc. Lond. Proc. Ser. A Math. Phys. Eng. Sci.  456  (2000), 365--386. 

\bibitem{U}
Y. Uda, The critical exponent for a weakly coupled system of the generalized Fujita type reaction-diffusion equations, 
 Z. Angew. Math. Phys. \textbf{46} (1995), 366--383. 

\bibitem{QS1}
P. Quittner and Ph. Souplet, Superlinear parabolic problems, 
Blow-up, global existence and steady states, 
Second edition, Birkh\"{a}user/Springer, Cham,  2019. 
%xvi+725 pp. ISBN: 978-3-030-18220-5; 978-3-030-18222-9. 


\bibitem{W}
L. Wang, 
On the regularity theory of fully nonlinear parabolic equations: II,
Comm. Pure Appl. Math. \textbf{45}  (1992), 141-178. 

%\bibitem{W}
%F. B. Weissler, 
%Local existence and nonexistence for semilinear parabolic equations in $L^p$, 
%Indiana Univ. Math. J. {\bf 29} (1980), 79--102. 




\end{thebibliography}
\end{document}